\newtheorem{theorem}{Theorem}[section]
\newtheorem{lemma}[theorem]{Lemma}
\newtheorem{corollary}[theorem]{Corollary}
\newtheorem{remark}[theorem]{Remark}
\newtheorem{proposition}[theorem]{Proposition}
\newtheorem{definition}[theorem]{Definition}
\newtheorem{example}[theorem]{Example}
\newproof{proof}{Proof}
\numberwithin{equation}{section}
\numberwithin{theorem}{section}
\newcommand{\e}{\varepsilon}
\newcommand{\w}{\omega}
\newcommand{\ff}{\mathbb{F}}
\newcommand{\IF}{\mathbb{F}}
\newcommand{\Nn}{\mathcal{N}}
\newcommand{\LL}{\mathcal{L}}
\newcommand{\cl}{\mathrm{cl}}
\newcommand{\Ra}{\Rightarrow}
\newcommand{\cacx}{\overline{\mathrm{acx}}}
\newcommand{\Bo}{\mathsf{Bo}}
\newcommand{\Id}{\mathsf{id}}
\begin{document}

\begin{frontmatter}

\title{Weakly and weak$^\ast$ $p$-convergent operators}

\author{Saak Gabriyelyan}
\ead{saak@math.bgu.ac.il}
\address{Department of Mathematics, Ben-Gurion University of the Negev, Beer-Sheva, P.O. 653, Israel}

\begin{abstract}
Let $p\in[1,\infty]$. Being motivated by weakly $p$-convergent and weak$^\ast$ $p$-convergent operators between Banach spaces introduced by Fourie and Zeekoei, we introduce and study the classes of weakly $p$-convergent and weak$^\ast$ $p$-convergent operators between arbitrary locally convex spaces. Relationships between these classes of operators are given, and we show that they have ideal properties. Numerous characterizations of weakly $p$-convergent and weak$^\ast$ $p$-convergent operators are given.
\end{abstract}

\begin{keyword}
locally convex space \sep $p$-convergent operator \sep weakly $p$-convergent operator \sep weak$^\ast$ $p$-convergent operator \sep limited set \sep $p$-$(V^\ast)$ set of Pe{\l}czy\'{n}ski

\MSC[2010] 46A3 \sep 46E10

\end{keyword}

\end{frontmatter}


\section{Introduction}


Unifying the notion of unconditionally converging operators and the notion of completely continuous operators, Castillo and S\'{a}nchez selected in \cite{CS} the class of $p$-convergent operators. An operator $T:X\to Y$ between Banach spaces $X$ and $Y$ is called {\em $p$-convergent} if it transforms weakly $p$-summable sequences into norm null sequences (all relevant definitions are given in Section \ref{sec:pre}). Using this notion they introduced and study Banach spaces with the Dunford--Pettis property of order $p$ ($DPP_p$ for short)  for every $p\in[1,\infty]$. A Banach space $X$ is said to have the $DPP_p$ if every weakly compact operator from $X$ into a Banach space $Y$ is $p$-convergent.

The influential article  of Castillo and S\'{a}nchez \cite{CS} inspired an intensive study of $p$-versions of numerous geometrical properties of Banach spaces and new classes  of  operators of $p$-convergent type. The following two classes of operators between Banach spaces were introduced and studied by Fourie and Zeekoei in \cite{FZ-con} and  \cite{FZ-p}, respectively, where the Banach dual of a Banach space $X$ is denoted by $X^\ast$.

\begin{definition} \label{def:weak-p-convergent-Banach} {\em
Let $p\in[1,\infty]$, and let $X$ and $Y$ be Banach spaces. An operator $T:X\to Y$ is called
\begin{enumerate}
\item[{\rm(i)}] {\em weakly $p$-convergent}  if
$
\lim_{n\to\infty} \langle\eta_n, T(x_n)\rangle=0
$
for every weakly null sequence $\{\eta_n\}_{n\in\w}$ in $Y^\ast$ and each weakly $p$-summable sequence $\{x_n\}_{n\in\w}$ in $X$;
\item[{\rm(ii)}] {\em weak$^\ast$ $p$-convergent} if
$
\lim_{n\to\infty} \langle\eta_n, T(x_n)\rangle=0
$
for every weak$^\ast$ null sequence $\{\eta_n\}_{n\in\w}$ in $Y^\ast$ and each weakly $p$-summable sequence $\{x_n\}_{n\in\w}$ in $X$.\qed
\end{enumerate}}
\end{definition}
It should be mentioned that if $p=\infty$, weakly $p$-convergent operators are known as {\em weak Dunford--Pettis} operators (see \cite[p.~349]{AB}) and  weak$^\ast$ $p$-convergent operators are known as {\em weak$^\ast$ Dunford--Pettis} operators (see \cite{EKHBM}).

Numerous characterizations and  applications of weakly $p$-convergent and weak$^\ast$ $p$-convergent operators between Banach spaces and in particular between Banach lattices were obtained in  \cite{FZ-con,FZ-p,FZ-pL}. These results motivate us to consider these classes of operators in the general case of locally convex spaces.

\begin{definition} \label{def:weak-p-convergent} {\em
Let $p\in[1,\infty]$, and let $E$ and $L$ be separated topological vector spaces. A linear map $T:E\to L$ is called
\begin{enumerate}
\item[{\rm(i)}] {\em weakly $p$-convergent}  if
$
\lim_{n\to\infty} \langle\eta_n, T(x_n)\rangle=0
$
for every weakly null sequence $\{\eta_n\}_{n\in\w}$ in $L'_\beta$ and each weakly $p$-summable sequence $\{x_n\}_{n\in\w}$ in $E$;
\item[{\rm(ii)}] {\em weak$^\ast$ $p$-convergent} if
$
\lim_{n\to\infty} \langle\eta_n, T(x_n)\rangle=0
$
for every weak$^\ast$ null sequence $\{\eta_n\}_{n\in\w}$ in $L'$ and each weakly $p$-summable sequence $\{x_n\}_{n\in\w}$ in $E$.
\end{enumerate}}
\end{definition}

Relationships between the classes of $p$-convergent, weakly $p$-convergent and  weak$^\ast$ $p$-convergent operators  are given in Proposition \ref{p:weak*-p-convergent-weak-con}, and  Proposition \ref{p:weak*-p-convergent=weak-con} provides a sufficient condition on the range space $L$ under which all these three classes of operators coincide.

In \cite{BourDies} Bourgain and Diestel introduced the class of limited operators between Banach spaces. More general classes of limited completely continuous and  limited $p$-convergent operators were defined and studied by Salimi and  Moshtaghioun \cite{SaMo}  and Fourie and Zeekoei  \cite{FZ-p}, respectively. We generalize these classes by introducing the classes of $(q',q)$-limited $p$-convergent and   $(q',q)$-$(V^\ast)$ $p$-convergent operators from a locally convex space $E$ to a locally convex space $L$, where $p,q,q'\in[1,\infty]$ and $q'\leq q$. In Proposition \ref{p:weak-pq-convergent-ideal} we show that these new classes have ideal properties.
If the space $L$ contains an isomorphic copy of $\ell_\infty$, in Theorems \ref{t:weak-p-conv-limited} and \ref{t:weak-p-conv-V*} we show that all weakly $p$-convergent  operators are $(q',q)$-limited $p$-convergent (resp.,  $(q',q)$-$(V^\ast)$ $p$-convergent) if and only if so are all   weak$^\ast$ $p$-convergent operators if and only if so is the identity operator $\Id_E:E\to E$.

The main results of the article are Theorems \ref{t:weak*-p-convergent} and \ref{t:char-weakly-p-convergent} in which we give numerous characterizations of weak$^\ast$ $p$-convergent and weakly $p$-convergent  operators between locally convex spaces.


\section{Preliminaries results} \label{sec:pre}


We start with some necessary definitions and notations used in the article. Set $\w:=\{ 0,1,2,\dots\}$. All topological spaces are assumed to be Tychonoff (= completely regular and $T_1$). The closure of a subset $A$ of a topological space $X$ is denoted by $\overline{A}$, $\overline{A}^X$ or $\cl_X(A)$. A topological space $X$ is defined to be {\em selectively sequentially pseudocompact} if for any sequence $\{U_n\}_{n\in\w}$ of open sets of $X$ there exists a sequence $(x_n)_{n\in\w}\in\prod_{n\in\w}U_n$ containing a convergent subsequence.
A function $f:X\to Y$ between topological spaces $X$ and $Y$ is called {\em sequentially continuous} if for any convergent sequence $\{x_n\}_{n\in\w}\subseteq X$, the sequence $\{f(x_n)\}_{n\in\w}$ converges in $Y$ and $\lim_{n}f(x_n)=f(\lim_{n}x_n)$. We denote by $C(X)$ the vector space of all continuous $\ff$-valued functions on $X$.
A subset $A$ of a topological space $X$ is called
\begin{enumerate}
\item[$\bullet$] {\em relatively compact} if its closure ${\bar A}$ is compact;
\item[$\bullet$] ({\em relatively}) {\em sequentially compact} if each sequence in $A$ has a subsequence converging to a point of $A$ (resp., of $X$);
\item[$\bullet$] {\em functionally bounded in $X$} if every $f\in C(X)$ is bounded on $A$.
\end{enumerate}
The space $C(X)$ endowed with the pointwise topology is denoted by $C_p(X)$.

Let $E$ be a locally convex space. We assume that $E$ is over the field $\ff$ of real or complex numbers.  We denote by $\Nn_0(E)$ (resp., $\Nn_{0}^c(E)$) the family of all (resp., closed absolutely convex) neighborhoods of zero of $E$. The family of all bounded subsets of $E$ is denoted by $\Bo(E)$. The topological dual space of $E$  is denoted by $E'$. The value of $\chi\in E'$ on $x\in E$ is denoted by $\langle\chi,x\rangle$ or $\chi(x)$. A sequence $\{x_n\}_{n\in\w}$ in $E$ is said to be {\em Cauchy} if for every $U\in\Nn_0(E)$ there is $N\in\w$ such that $x_n-x_m\in U$ for all $n,m\geq N$. It is easy to see that a sequence $\{x_n\}_{n\in\w}$ in  $E$ is Cauchy if and only if $x_{n_k}-x_{n_{k+1}}\to 0$ for every (strictly) increasing sequence $(n_k)$ in $\w$. We denote by $E_w$ and $E_\beta$ the space $E$ endowed with the weak topology $\sigma(E,E')$ and with the strong topology $\beta(E,E')$, respectively. The topological dual space $E'$ of $E$ endowed with weak$^\ast$ topology $\sigma(E',E)$ or with the strong topology $\beta(E',E)$ is denoted by $E'_{w^\ast}$ or $E'_\beta$, respectively. The closure of a subset $A$ in the weak topology is denoted by $\overline{A}^{\,w}$ or $\overline{A}^{\,\sigma(E,E')}$, and $\overline{B}^{\,w^\ast}$ (or $\overline{B}^{\,\sigma(E',E)}$) denotes the closure of $B\subseteq E'$ in the weak$^\ast$ topology. The {\em polar} of a subset $A$ of $E$ is denoted by
\[
A^\circ :=\{ \chi\in E': \|\chi\|_A \leq 1\}, \quad\mbox{ where }\quad\|\chi\|_A=\sup\big\{|\chi(x)|: x\in A\cup\{0\}\big\}.
\]
A subset $B$ of $E'$ is {\em equicontinuous} if $B\subseteq U^\circ$ for some $U\in \Nn_0(E)$. The family of all continuous linear maps (= operators) from an lcs $H$ to an lcs $L$ is denoted by $\LL(H,L)$.

Let $p\in[1,\infty]$. Then $p^\ast$ is defined to be the unique element of $ [1,\infty]$ which satisfies $\tfrac{1}{p}+\tfrac{1}{p^\ast}=1$. For $p\in[1,\infty)$, the space $\ell_{p^\ast}$  is the dual space of $\ell_p$. We denote by $\{e_n\}_{n\in\w}$ the canonical basis of $\ell_p$, if $1\leq p<\infty$, or the canonical basis of $c_0$, if $p=\infty$. The canonical basis of $\ell_{p^\ast}$ is denoted by $\{e_n^\ast\}_{n\in\w}$.  Denote by  $\ell_p^0$ and $c_0^0$ the linear span of $\{e_n\}_{n\in\w}$  in  $\ell_p$ or $c_0$ endowed with the induced norm topology, respectively.

A subset $A$ of a locally convex space $E$ is called
\begin{enumerate}
\item[$\bullet$] {\em precompact} if for every $U\in\Nn_0(E)$ there is a finite set $F\subseteq E$ such that $A\subseteq F+U$;
\item[$\bullet$] {\em sequentially precompact} if every sequence in $A$ has a Cauchy subsequence;
\item[$\bullet$] {\em weakly $($sequentially$)$ compact} if $A$ is (sequentially) compact in $E_w$;
\item[$\bullet$] {\em relatively weakly compact} if its weak closure $\overline{A}^{\,\sigma(E,E')}$ is compact in $E_w$;
\item[$\bullet$] {\em relatively weakly sequentially compact} if each sequence in $A$ has a subsequence weakly converging to a point of $E$;
\item[$\bullet$] {\em weakly sequentially precompact} if each sequence in $A$ has a weakly Cauchy subsequence.
\end{enumerate}
Note that each sequentially precompact subset of $E$ is precompact, but the converse is not true in general, see Lemma 2.2 of \cite{Gab-Pel}. 

Let $p\in[1,\infty]$. A sequence  $\{x_n\}_{n\in\w}$ in a locally convex space $E$ is called
\begin{enumerate}
\item[$\bullet$]  {\em weakly $p$-summable} if  for every $\chi\in E'$, it follows
\[
\mbox{$(\langle\chi, x_n\rangle)_{n\in\w} \in\ell_p$ if $p<\infty$, and  $(\langle\chi, x_n\rangle)_{n\in\w} \in c_0$ if $p=\infty$;}
\]
\item[$\bullet$] {\em weakly $p$-convergent to $x\in E$} if  $\{x_n-x\}_{n\in\w}$ is weakly $p$-summable;
\item[$\bullet$] {\em weakly $p$-Cauchy} if for each pair of strictly increasing sequences $(k_n),(j_n)\subseteq \w$, the sequence  $(x_{k_n}-x_{j_n})_{n\in\w}$ is weakly $p$-summable.
\end{enumerate}
The family of all weakly $p$-summable sequences of $E$ is denoted by $\ell_p^w(E)$ or $c_0^w(E)$ if $p=\infty$.

A sequence  $\{\chi_n\}_{n\in\w}$ in $E'$ is called  {\em weak$^\ast$ $p$-summable} (resp., {\em weak$^\ast$ $p$-convergent to $\chi\in E'$} or  {\em weak$^\ast$ $p$-Cauchy})  if it is weakly $p$-summable (resp., weakly $p$-convergent to $\chi\in E'$ or weakly $p$-Cauchy) in $E'_{w^\ast}$.

Generalizing the classical notions of limited subsets, $p$-limited subsets, $p$-$(V^\ast)$ subsets and coarse $p$-limited subsets of a Banach space $X$ and $p$-$(V)$ subsets of the Banach dual $X^\ast$ introduced in \cite{BourDies}, \cite{KarnSinha}, \cite{CCDL}, \cite{GalMir} and \cite{LCCD}, respectively, we defined in \cite{Gab-Pel,Gab-limited} the following notions.
Let $1\leq p\leq q\leq\infty$, and let $E$ be a locally convex space $E$. Then:
\begin{enumerate}
\item[$\bullet$] a non-empty subset   $A$ of $E$ is a  {\em $(p,q)$-$(V^\ast)$ set} (resp., a {\em $(p,q)$-limited set}) if
\[
\Big(\sup_{a\in A} |\langle \chi_n, a\rangle|\Big)\in \ell_q \; \mbox{ if $q<\infty$, } \; \mbox{ or }\;\; \Big(\sup_{a\in A} |\langle \chi_n, a\rangle|\Big)\in c_0 \; \mbox{ if $q=\infty$},
\]
for every  weakly (resp., weak$^\ast$) $p$-summable sequence $\{\chi_n\}_{n\in\w}$ in  $E'_\beta$. $(p,\infty)$-$(V^\ast)$ sets and $(1,\infty)$-$(V^\ast)$ sets will be called simply {\em $p$-$(V^\ast)$ sets} and {\em $(V^\ast)$ sets}, respectively. Analogously, $(p,p)$-limited sets and $(\infty,\infty)$-limited sets will be called {\em $p$-limited sets} and {\em limited sets}, respectively.
\item[$\bullet$] a non-empty subset   $A$ of $E$ is a {\em coarse $p$-limited set} if for every $T\in\LL(E,\ell_p) $ $($or $T\in\LL(E,c_0)$ if $p=\infty$), the set $T(A)$ is relatively compact.
\item[$\bullet$] a non-empty subset $B$ of $E'$ is a  {\em $(p,q)$-$(V)$ set} if
\[
\Big(\sup_{\chi\in B} |\langle \chi, x_n\rangle|\Big)\in \ell_q \; \mbox{ if $q<\infty$, } \; \mbox{ or }\;\; \Big(\sup_{\chi\in B} |\langle \chi, x_n\rangle|\Big)\in c_0 \; \mbox{ if $q=\infty$},
\]
for every weakly $p$-summable sequence $\{x_n\}_{n\in\w}$ in  $E$. $(p,\infty)$-$(V)$ sets and $(1,\infty)$-$(V)$ sets  will be called simply {\em $p$-$(V)$ sets} and {\em $(V)$ sets}, respectively.
\end{enumerate}

Recall that a locally convex space  $E$
\begin{enumerate}
\item[$\bullet$] is {\em sequentially complete} if each Cauchy sequence in $E$ converges;
\item[$\bullet$] ({\em quasi}){\em barrelled} if every $\sigma(E',E)$-bounded (resp., $\beta(E',E)$-bounded) subset of $E'$ is equicontinuous;
\item[$\bullet$] {\em $c_0$-}({\em quasi}){\em barrelled} if every $\sigma(E',E)$-null (resp., $\beta(E',E)$-null) sequence is equicontinuous.
\end{enumerate}

The following weak barrelledness conditions introduced and studied in \cite{Gab-Pel} will play a considerable role in the article.
Let $p\in[1,\infty]$. A locally convex space $E$ is called  {\em $p$-barrelled } (resp.,  {\em $p$-quasibarrelled}) if every weakly $p$-summable sequence in $E'_{w^\ast}$ (resp., in  $E'_\beta$) is equicontinuous. It is clear that $E$ is $\infty$-barrelled if and only if it is $c_0$-barrelled.

We shall consider also the following linear map introduced in \cite{Gab-Pel}
\[
S_p:\LL(E,\ell_p)\to \ell_p^w(E'_{w^\ast}) \quad \big(\mbox{or $S_\infty: \LL(E,c_0)\to c_0^w(E'_{w^\ast})$ if $p=\infty$}\big)
\]
defined by $S_p(T):=\big(T^\ast(e^\ast_n)\big)_{n\in\w}$.

The following $p$-versions of weakly compact-type properties are defined in \cite{Gab-Pel} generalizing the corresponding notions in the class of Banach spaces introduced in \cite{CS} and \cite{Ghenciu-pGP}. Let $p\in[1,\infty]$. A subset   $A$ of a locally convex space $E$ is called
\begin{enumerate}
\item[$\bullet$] ({\em relatively}) {\em weakly sequentially $p$-compact} if every sequence in $A$ has a weakly $p$-convergent  subsequence with limit in $A$ (resp., in $E$);
\item[$\bullet$] {\em weakly  sequentially $p$-precompact} if every sequence from $A$ has a  weakly $p$-Cauchy subsequence.
\end{enumerate}
It is clear that each relatively weakly sequentially $p$-compact subset of $E$ is weakly  sequentially $p$-precompact.

Let $E$ and $L$ be locally convex spaces. An operator $T\in \LL(E,L)$ is called
{\em weakly sequentially compact} (resp., {\em  weakly sequentially $p$-compact, weakly sequentially $p$-precompact {\em or} coarse $p$-limited}) if there is $U\in \Nn_0(E)$ such that $T(U)$ is a  relatively weakly sequentially compact  (resp., relatively weakly sequentially $p$-compact, weakly  sequentially $p$-precompact or coarse $p$-limited)  subset of $L$. Generalizing the notion of $p$-convergent operators between Banach spaces and following \cite{Gab-Pel}, an operator $T\in \LL(E,L)$ is called {\em $p$-convergent} if $T$ sends weakly $p$-summable sequences of $E$ to null sequences of $L$.


\section{Main results} \label{sec:limited-p-conv}


The following  assertion  gives the first relationships between different $p$-convergent types of operators. Recall that a locally convex space $E$ is called {\em Grothendieck} or has the {\em Grothendieck property} if the identity map $\Id_{E'}:  E'_{w^\ast} \to \big(E'_\beta\big)_w$ is sequentially continuous.
\begin{proposition} \label{p:weak*-p-convergent-weak-con}
Let $p\in[1,\infty]$,  $E$ and $L$ be locally convex spaces, and let $T:E\to L$ be a linear map. Then:
\begin{enumerate}
\item[{\rm(i)}] if $T$ is finite-dimensional and continuous, then $T$ is $p$-convergent, coarse $p$-limited and  weak$^\ast$ $p$-convergent;
\item[{\rm(ii)}]  if $T$ is weak$^\ast$ $p$-convergent, then it is weakly $p$-convergent; the converse it true if $L$ has the Grothendieck property;
\item[{\rm(iii)}] if $L$ is $\infty$-quasibarrelled and $T$ is $p$-convergent, then $T$ is weakly $p$-convergent;
\item[{\rm(iv)}] if $L$ is $c_0$-barrelled and  $T$ is $p$-convergent, then $T$ is weak$^\ast$ $p$-convergent.
\end{enumerate}
\end{proposition}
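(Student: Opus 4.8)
The four items are essentially independent: (i) follows from the explicit structure of a continuous finite-dimensional map, while (ii)--(iv) compare the weak$^\ast$ topology $\sigma(L',L)$ with the weak topology of $L'_\beta$ and use $p$-convergence. For (i) I would fix a representation $T(x)=\sum_{i=1}^{k}\langle\chi_i,x\rangle\,y_i$, where $y_1,\dots,y_k$ is a basis of the finite-dimensional range $T(E)$ and $\chi_i:=\pi_i\circ T\in E'$ for the (automatically continuous) coordinate functionals $\pi_i$ of $T(E)$. Given a weakly $p$-summable $\{x_n\}_{n\in\w}$ in $E$, each scalar sequence $(\langle\chi_i,x_n\rangle)_{n}$ lies in $\ell_p$ (or $c_0$ if $p=\infty$), hence is null, so $T(x_n)=\sum_{i=1}^{k}\langle\chi_i,x_n\rangle\,y_i\to 0$ in $L$, which is $p$-convergence. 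For weak$^\ast$ $p$-convergence I expand $\langle\eta_n,T(x_n)\rangle=\sum_{i=1}^{k}\langle\chi_i,x_n\rangle\langle\eta_n,y_i\rangle$, in which each summand is the product of the null sequence $(\langle\chi_i,x_n\rangle)_n$ with the bounded sequence $(\langle\eta_n,y_i\rangle)_n$ (bounded since $\eta_n\to 0$ in $\sigma(L',L)$), so the finite sum is null. Finally, choosing $U\in\Nn_0(E)$ with $|\langle\chi_i,x\rangle|\le 1$ for all $x\in U$ and all $i\le k$ confines $T(U)$ to a bounded, hence relatively compact, subset of $T(E)$; for any $S\in\LL(L,\ell_p)$ (or $S\in\LL(L,c_0)$ when $p=\infty$) the set $S(T(U))$ is then relatively compact as the continuous image of a relatively compact set, giving coarse $p$-limitedness of $T$.

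For (ii), the forward implication rests on a topology inclusion: every $x\in L$ induces a $\beta(L',L)$-continuous functional on $L'$ (the seminorm $\chi\mapsto|\langle\chi,x\rangle|$ is a strong seminorm), so $L\subseteq(L'_\beta)'$ and hence $\sigma(L',L)\subseteq\sigma(L'_\beta,(L'_\beta)')$. Consequently any weakly null sequence in $L'_\beta$ is weak$^\ast$ null, and the defining inequality for weak$^\ast$ $p$-convergence applies verbatim to give weak $p$-convergence. For the converse, when $L$ is Grothendieck the sequentially continuous identity $\Id_{L'}:L'_{w^\ast}\to(L'_\beta)_w$ carries a weak$^\ast$ null $\{\eta_n\}_{n\in\w}$ to a weakly null sequence in $L'_\beta$, whereupon weak $p$-convergence of $T$ yields $\langle\eta_n,T(x_n)\rangle\to 0$.

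Items (iii) and (iv) I would handle with a single equicontinuity estimate. In (iii) a weakly null sequence in $L'_\beta$ is precisely a weakly $\infty$-summable one, so $\infty$-quasibarrelledness of $L$ makes $\{\eta_n\}_{n\in\w}$ equicontinuous; in (iv) $c_0$-barrelledness makes a weak$^\ast$ null $\{\eta_n\}_{n\in\w}$ equicontinuous. In either case $\{\eta_n\}_{n\in\w}\subseteq U^\circ$ for some closed absolutely convex $U\in\Nn_0(L)$, and since $T$ is $p$-convergent we have $T(x_n)\to 0$; thus for each $\e>0$, once $T(x_n)\in\e U$ we get $|\langle\eta_n,T(x_n)\rangle|\le\e$, so $\langle\eta_n,T(x_n)\rangle\to 0$. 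I anticipate no essential obstacle; the only points requiring care are the identity $L\subseteq(L'_\beta)'$ underpinning the topology comparison in (ii) and, in (iii), the identification of weakly null sequences of $L'_\beta$ with exactly the weakly $\infty$-summable sequences that $\infty$-quasibarrelledness governs.
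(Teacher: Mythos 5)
Your proof is correct, and for items (ii)--(iv) it is essentially identical to the paper's: the forward direction of (ii) uses that weakly null sequences in $L'_\beta$ are weak$^\ast$ null (you justify this via $L\subseteq(L'_\beta)'$, which the paper leaves implicit), the converse uses sequential continuity of $\Id_{L'}:L'_{w^\ast}\to(L'_\beta)_w$ exactly as in the paper, and your unified equicontinuity argument for (iii)--(iv) is the paper's argument verbatim, with your polar-and-scaling estimate $\{\eta_n\}\subseteq U^\circ$, $T(x_n)\in\e U$ playing the role of the paper's choice of $U$ on which all $|\langle\eta_n,\cdot\rangle|<\e$.

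The only real divergence is in (i). The paper disposes of (i) by citation: it invokes ``the corresponding definitions'' together with Proposition 4.2(iv) of the companion paper on limited sets (finite sets are coarse $p$-limited), whereas you prove everything from scratch via the finite-rank representation $T(x)=\sum_{i=1}^{k}\langle\chi_i,x\rangle y_i$ with $\chi_i=\pi_i\circ T\in E'$. This buys self-containedness: $p$-convergence and weak$^\ast$ $p$-convergence drop out of the null-times-bounded estimate on each coordinate, and coarse $p$-limitedness follows because $T(U)$ sits inside a bounded, hence relatively compact, subset of the finite-dimensional range, whose image under any $S\in\LL(L,\ell_p)$ (or $\LL(L,c_0)$) stays relatively compact. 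All steps are sound (continuity of the coordinate functionals uses that $T(E)$ is a finite-dimensional Hausdorff space, which the paper's standing separation assumption guarantees), so your version is a legitimate, more elementary substitute for the paper's external reference.
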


\begin{proof}
(i) follows from the corresponding definitions and (iv) of Proposition 4.2 of \cite{Gab-limited} 
(which states that every finite subset of $E$ is coarse $p$-limited).

(ii) follows from the fact that every weakly null sequence in $L'_\beta$ is also weak$^\ast$ null and the definition of the Grothendieck property.

(iii), (iv):  Let $\{\eta_n\}_{n\in\w}\subseteq L'_\beta$ be a weakly (resp., weak$^\ast$) null-sequence, and let $\{x_n\}_{n\in\w}\subseteq E$ be a weakly $p$-summable sequence. As $L$ is $\infty$-quasibarrelled (resp., $c_0$-barrelled), the sequence $\{\eta_n\}_{n\in\w}$ is equicontinuous. Now, fix an arbitrary $\e>0$. Choose $U\in \Nn_0(L)$ such that
\begin{equation} \label{equ:weak-p-conver-2}
|\langle\eta_n,y\rangle|<\e \quad \mbox{ for every $n\in\w$ and each $y\in U$}.
\end{equation}
Since $T$ is $p$-convergent, $T(x_n)\to 0$ in $L$, and hence there is $N_\e\in\w$ such that
\begin{equation} \label{equ:weak-p-conver-3}
T(x_n)\in U \quad \mbox{ for every $n\geq N_\e$}.
\end{equation}
Then (\ref{equ:weak-p-conver-2}) and  (\ref{equ:weak-p-conver-3}) imply
\[
|\langle\eta_n,T(x_n)\rangle| <\e \quad \mbox{ for every $n\geq N_\e$}.
\]
Therefore $\langle\eta_n,T(x_n)\rangle\to 0$ as $n\to\infty$. Thus $T$ is weakly (resp., weak$^\ast$) $p$-convergent. \qed
\end{proof}

Below we consider the case when the classes of all weakly $p$-convergent, weak$^\ast$ $p$-convergent and $p$-convergent operators coincide. Observe that the conditions of the proposition are satisfied if $L$ is a separable reflexive Fr\'{e}chet space or a reflexive Banach space. In particular, this proposition generalizes Corollary 2.4 and Proposition 2.5 of \cite{FZ-p}.
\begin{proposition} \label{p:weak*-p-convergent=weak-con}
Let $p\in[1,\infty]$, $E$ be a locally convex space, and let $L$ be an $\infty$-quasibarrelled Grothendieck space such that $U^\circ$ is weak$^\ast$ selectively sequentially pseudocompact for every $U\in\Nn_0(L)$. Then for an operator $T:E\to L$, the following assertions are equivalent:
\begin{enumerate}
\item[{\rm(i)}] $T$ is weakly $p$-convergent;
\item[{\rm(ii)}] $T$ is weak$^\ast$ $p$-convergent;
\item[{\rm(iii)}] $T$ is  $p$-convergent.
\end{enumerate}
\end{proposition}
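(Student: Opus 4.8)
The plan is to establish a cycle of implications. The implications (ii)$\Rightarrow$(i) and (iii)$\Rightarrow$(ii) are essentially free from the preceding Proposition \ref{p:weak*-p-convergent-weak-con}: since $L$ has the Grothendieck property, part (ii) of that proposition gives (i)$\Leftrightarrow$(ii), and since $L$ is $\infty$-quasibarrelled hence $c_0$-barrelled, part (iv) gives (iii)$\Rightarrow$(ii). (One should note that an $\infty$-quasibarrelled Grothendieck space is $c_0$-barrelled, since the Grothendieck property identifies weak$^\ast$ null sequences with weakly null ones in $L'_\beta$; this lets me invoke (iv).) So the entire content of the proposition reduces to the single remaining implication (i)$\Rightarrow$(iii), or equivalently (ii)$\Rightarrow$(iii): I must show that a weak$^\ast$ $p$-convergent operator is $p$-convergent.

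\medskip

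For this, let $\{x_n\}_{n\in\w}$ be weakly $p$-summable in $E$ and suppose, for contradiction, that $T(x_n)\not\to 0$ in $L$. Then there is a neighborhood-of-zero $V\in\Nn_0(L)$, which I may take absolutely convex and closed, and a subsequence (relabelled $\{x_n\}$) with $T(x_n)\notin V$ for all $n$. The plan is to extract, for each $n$, a functional $\eta_n\in V^\circ$ with $|\langle\eta_n,T(x_n)\rangle|\geq 1$: this is possible by the Hahn--Banach separation theorem, since $T(x_n)\notin V$ and $V$ is closed absolutely convex, so that $\sup\{\,|\langle\chi,T(x_n)\rangle| : \chi\in V^\circ\}=\|T(x_n)\|_{V}>1$ for a suitable normalization. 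Now $\{\eta_n\}_{n\in\w}$ lies in the equicontinuous polar $V^\circ$, and this is precisely where the selective sequential pseudocompactness hypothesis enters.

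\medskip

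The selective sequential pseudocompactness of $U^\circ$ in the weak$^\ast$ topology, applied to a suitable sequence of weak$^\ast$-open subsets of $V^\circ$ isolating the $\eta_n$, should yield a subsequence $\{\eta_{n_k}\}$ that is weak$^\ast$ convergent in $L'$, say to some $\eta$. Replacing $\eta_{n_k}$ by $\eta_{n_k}-\eta$ produces a \emph{weak$^\ast$ null} sequence $\{\zeta_k\}$ in $L'$. Since $T$ is weak$^\ast$ $p$-convergent and $\{x_{n_k}\}$ remains weakly $p$-summable, Definition \ref{def:weak-p-convergent}(ii) forces $\langle\zeta_k,T(x_{n_k})\rangle\to 0$, hence $\langle\eta_{n_k},T(x_{n_k})\rangle\to\langle\eta,T(x_{n_k})\rangle$; but the limit term also tends to $0$ because $\{T(x_{n_k})\}$ is weakly $p$-summable (being the image of a weakly $p$-summable sequence, as $T$ is continuous) and so $\langle\eta,T(x_{n_k})\rangle\to 0$. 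This contradicts $|\langle\eta_{n_k},T(x_{n_k})\rangle|\geq 1$, completing the argument.

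\medskip

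\textbf{The main obstacle} I anticipate is the careful deployment of selective sequential pseudocompactness: that property is phrased in terms of choosing points from a \emph{given sequence of open sets}, so I must manufacture weak$^\ast$-open sets $W_n\subseteq V^\circ$ whose selections can only be the $\eta_n$ (or points behaving like them). The cleanest route is probably to observe that the relevant separating functionals can be chosen to sit inside prescribed weak$^\ast$-open neighborhoods, and then feed these neighborhoods into the pseudocompactness hypothesis to guarantee a convergent subsequence; verifying that the selected convergent subsequence still satisfies the lower bound $|\langle\eta_{n_k},T(x_{n_k})\rangle|\geq 1$ is the delicate point requiring attention. Everything else is routine once this extraction of a weak$^\ast$ convergent (and hence, after translation, weak$^\ast$ null) sequence of separating functionals is secured.
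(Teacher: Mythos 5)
Your proposal is correct and follows essentially the same route as the paper: the same reduction via Proposition \ref{p:weak*-p-convergent-weak-con} to the single remaining implication (ii)$\Ra$(iii) (the paper uses part (iii) of that proposition to get (iii)$\Ra$(i) directly, rather than your (iii)$\Ra$(ii) via $c_0$-barrelledness, but this is immaterial), followed by the same contradiction argument: Hahn--Banach separating functionals $\eta_n\in V^\circ$, selective sequential pseudocompactness of $V^\circ$ in the weak$^\ast$ topology, and the splitting $\langle\chi_{n_k},T(x_{n_k})\rangle=\langle\chi_{n_k}-\chi,T(x_{n_k})\rangle+\langle\chi,T(x_{n_k})\rangle\to 0$. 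The ``delicate point'' you flag is resolved exactly as you anticipate: the paper defines $U_n:=\{\chi\in V^\circ:|\langle\chi,T(x_n)\rangle|>1\}$, a weak$^\ast$-open neighborhood of $\eta_n$ in $V^\circ$, so any point selected from $U_n$ satisfies the lower bound by the very definition of $U_n$, and the weak$^\ast$-convergent subsequence produced by pseudocompactness inherits it automatically.
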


\begin{proof}
The equivalence (i)$\Leftrightarrow$(ii) follow from (ii) of Proposition \ref{p:weak*-p-convergent-weak-con}, and the implication (iii)$\Ra$(i) follows from (iii) of Proposition \ref{p:weak*-p-convergent-weak-con}.

(ii)$\Ra$(iii) Suppose for a contradiction that there is a weakly $p$-summable sequence $\{x_n\}_{n\in\w}$ in $E$ such that $T(x_n)\not\to 0$. Without loss of generality we can assume that there is $V\in\Nn_0^c(L)$ such that $T(x_n)\not\in V$ for every $n\in\w$. By the Hahn--Banach separation theorem, for every $n\in\w$ there is $\eta_n\in V^\circ$ such that $|\langle\eta_n, T(x_n)\rangle|>1$. For every $n\in\w$, set
\[
U_n:= \{ \chi\in V^\circ: |\langle\chi, T(x_n)\rangle|>1\}.
\]
Then $U_n$ is a weak$^\ast$ open neighborhood of $\eta_n$ in $V^\circ$. Since $V^\circ$ is  selectively sequentially pseudocompact in the weak$^\ast$ topology, for every $n\in\w$ there exists $\chi_n\in U_n$ such that the sequence $\{\chi_n\}_{n\in\w}$ contains a subsequence $\{\chi_{n_k}\}_{k\in\w}$ which weak$^\ast$ converges to some functional $\chi\in V^\circ$. Taking into account that the subsequence $\{x_{n_k}\}_{k\in\w}$ is also weakly $p$-summable and the operator $T$ is  weak$^\ast$ $p$-convergent the inclusion $\chi_{n_k}\in U_{n_k}$ implies
\[
1< \langle\chi_{n_k}, T(x_{n_k})\rangle = \langle \chi_{n_k}-\chi,T(x_{n_k})\rangle + \langle \chi, T(x_{n_k})\rangle\to 0 \; \mbox{ as } k\to\infty,
\]
a contradiction.\qed
\end{proof}

In (iv) of Proposition \ref{p:weak*-p-convergent-weak-con} the condition of being a $c_0$-barrelled space is not necessary in general even for operators, but  this condition cannot be completely omitted even for metrizable spaces, and also the condition in (iii) of being $\infty$-quasibarrelled is essential as the following example shows.

\begin{example} \label{exa:weak*-p-convergent-weak-con}
Let $p\in[1,\infty]$.
\begin{enumerate}
\item[{\rm(i)}] There are metrizable non-$c_0$-barrelled spaces $E$ and $L$ such that each operator $T:E\to L$ is finite-dimensional and hence it is $p$-convergent and  weak$^\ast$ $p$-convergent.
\item[{\rm(ii)}] There is a metrizable non-$c_0$-barrelled space $E$ such that the identity map $\Id_E$ is $p$-convergent and weakly  $p$-convergent, but it does not weak$^\ast$ $p$-convergent.
\item[{\rm(iii)}] There are a non-$\infty$-quasibarrelled space $E$ and an $\infty$-convergent operator $T:E\to E$ such that $T$ is not weakly $\infty$-convergent.
\end{enumerate}
\end{example}

\begin{proof}
(i) Let $E=(c_0)_p$ be the Banach space $c_0$ endowed with the topology induced from $\IF^\w$, and let $L:=c_0^0$. Then $E$ and $L$ are metrizable and non-barrelled. Since metrizable locally convex spaces are barrelled if and only if they are $c_0$-barrelled (see Proposition 12.2.3 of \cite{Jar}), it remains to note that, by (ii) of Example 5.4 of \cite{Gab-Pel}, 
each $T\in\LL(E,L)$  is finite-dimensional.

(ii) Let $E=L=C_p(\mathbf{s})$, where $\mathbf{s}=\{x_n\}_{n\in\w} \cup\{x_\infty\}$ is a convergent sequence. Then $E$ is a metrizable space and hence quasibarrelled. By Proposition 12.2.3 of \cite{Jar} 
and the Buchwalter–-Schmets theorem, $E$ is not $c_0$-barrelled.  Since $E$ carries its weak topology, $\Id_E$ is trivially $p$-convergent for every $p\in[1,\infty]$. Therefore, by (iii) of Proposition \ref{p:weak*-p-convergent-weak-con}, $\Id_E$ is weakly $p$-convergent. 
To show that $\Id_E$ is not  weak$^\ast$ $p$-convergent, for every $n\in\w$, let $\eta_n:=\delta_{x_n} -\delta_{x_\infty}$ and $f_n:= 1_{\{x_n\}}$, where $\delta_x$ is the Dirac measure at the point $x$ and $1_A$ is the characteristic function of a subset $A\subseteq \mathbf{s}$. It is well known that $C_p(\mathbf{s})'=L(\mathbf{s})$ algebraically, where $L(\mathbf{s})$ is the free locally convex space over $\mathbf{s}$. Now it is clear that the sequence $\{\eta_n\}_{n\in\w}$ is weak$^\ast$ null in $L'$ and $(f_n)\in \ell_p^w(E)$ (or $\in c_0^w(E)$ if $p=\infty$). However, since $\langle\eta_n,\Id_E(f_n)\rangle=f_n(x_n)-f_n(x_\infty)=1\not\to 0$ we obtain that $\Id_E$ is not weak$^\ast$ $p$-convergent.

(iii) Let $\mathbf{s}=\{x_n\}_{n\in\w}\cup\{ x_\infty\}$ be a convergent sequence, and let $E=L(\mathbf{s})$ be the free locally convex space over $\mathbf{s}$. By Example 5.5 of \cite{Gab-Pel},  
the space $E$ is not $1$-quasibarrelled and hence it is not $\infty$-quasibarrelled. Let $T=\Id_{E}:E\to E$ be the identity operator. Since, by Theorem 1.2 of \cite{Gab-Respected}, $E$ has the Schur property the operator $T$ is trivially $p$-convergent. We show that $T$ is not weakly $\infty$-convergent. To this end, as in the proof of (ii), we consider two sequences $\{\eta_n:=\delta_{x_n} -\delta_{x_\infty}\}_{n\in\w}\subseteq E$ and $\{f_n\}_{n\in\w}\subseteq E'=C(\mathbf{s})$. It is clear that  $\{\eta_n\}_{n\in\w}$ is weakly null in $E$. Since, by Proposition 3.4 of \cite{Gabr-free-lcs}, the space $E'_\beta$ is the Banach space $C(\mathbf{s})$, it is easy to see that the sequence $\{f_n=1_{\{x_n\}}\}_{n\in\w}$ is weakly null (= weakly $\infty$-summable). Taking into account that \[
\langle f_n, T(\eta_n)\rangle=\langle f_n, \delta_{x_n}-\delta_{x_\infty}\rangle= f_n(x_n)-f_n(x_\infty)=1 \not\to 0,
\]
it follows that $T$ is not weakly $\infty$-convergent.\qed
\end{proof}

Below we generalize the classes of limited, limited completely continuous and  limited $p$-convergent operators between Banach spaces.
\begin{definition} \label{def:limited-p-convergent} {\em
Let $p,q,q'\in[1,\infty]$, $q'\leq q$, and let $E$ and $L$ be locally convex spaces. A linear map $T:E\to L$ is called
\begin{enumerate}
\item[$\bullet$] {\em weakly $(p,q)$-convergent}  if
$
\lim_{n\to\infty} \langle\eta_n, T(x_n)\rangle=0
$
for every weakly $q$-summable sequence $\{\eta_n\}_{n\in\w}$ in $L'_\beta$ and each weakly $p$-summable sequence $\{x_n\}_{n\in\w}$ in $E$;
\item[$\bullet$] {\em weak$^\ast$ $(p,q)$-convergent} if
$
\lim_{n\to\infty} \langle\eta_n, T(x_n)\rangle=0
$
for every weak$^\ast$  $q$-summable sequence $\{\eta_n\}_{n\in\w}$ in $L'$ and each weakly $p$-summable sequence $\{x_n\}_{n\in\w}$ in $E$;
\item[$\bullet$] {\em $(p,q)$-limited} if $T(U)$ is a $(p,q)$-limited subset of $L$ for some $U\in\Nn_0(E)$; if $p=q$ or $p=q=\infty$ we shall say that $T$ is {\em $p$-limited} or {\em limited}, respectively;
\item[$\bullet$] {\em $(p,q)$-$(V^\ast)$} if $T(U)$ is a $(p,q)$-$(V^\ast)$ subset of $L$ for some $U\in\Nn_0(E)$; if $q=\infty$ or $p=1$ and $q=\infty$ we shall say that $T$ is {\em $p$-$(V^\ast)$} or {\em $(V^\ast)$}, respectively;
\item[$\bullet$] {\em $(q',q)$-limited $p$-convergent} if $T(x_n)\to 0$ for every  weakly $p$-summable sequence $\{x_n\}_{n\in\w}$ in $E$ which is a $(q',q)$-limited subset of $E$; if $q'=q$ or $q'=q=\infty$ we shall say that $T$ is {\em $q$-limited $p$-convergent} or {\em limited $p$-convergent}, respectively;
\item[$\bullet$] {\em $(q',q)$-$(V^\ast)$ $p$-convergent} if $T(x_n)\to 0$ for every  weakly $p$-summable sequence $\{x_n\}_{n\in\w}$ in $E$ which is a $(q',q)$-$(V^\ast)$ subset of $E$; if $q=\infty$ or $q'=q=\infty$ we shall say that $T$ is {\em $q'$-$(V^\ast)$ $p$-convergent} or {\em $(V^\ast)$ $p$-convergent}, respectively.    \qed
\end{enumerate} }
\end{definition}
It is clear that weakly $p$-convergent operators are exactly weakly $(p,\infty)$-convergent operators, and weak$^\ast$ $p$-convergent operators are exactly weak$^\ast$ $(p,\infty)$-convergent operators.

\begin{lemma} \label{l:fd-pq-limited}
Let $p,q,q'\in[1,\infty]$, $q'\leq q$, and let $E$ and $L$ be locally convex spaces. If $T\in\LL(E,L)$ is finite-dimensional, then $T$ is $(p,q)$-limited, $(p,q)$-$(V^\ast)$, $(q',q)$-limited $p$-convergent and $(q',q)$-$(V^\ast)$ $p$-convergent.
\end{lemma}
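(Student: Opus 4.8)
The plan is to separate the four conclusions into two groups. The properties of being $(q',q)$-limited $p$-convergent and $(q',q)$-$(V^\ast)$ $p$-convergent will require essentially no work: by (i) of Proposition \ref{p:weak*-p-convergent-weak-con} every finite-dimensional (continuous) operator is $p$-convergent, so $T(x_n)\to 0$ for \emph{every} weakly $p$-summable sequence $\{x_n\}_{n\in\w}$ in $E$. Since the definitions of $(q',q)$-limited $p$-convergent and $(q',q)$-$(V^\ast)$ $p$-convergent demand this conclusion only for the narrower families of weakly $p$-summable sequences that additionally form a $(q',q)$-limited (resp. $(q',q)$-$(V^\ast)$) subset of $E$, both properties hold automatically. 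Thus these two cases reduce to citing Proposition \ref{p:weak*-p-convergent-weak-con}(i).

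For the remaining two properties I would first put $T$ in a concrete form. As $T$ is finite-dimensional and continuous, write $T(x)=\sum_{i=1}^m\langle\chi_i,x\rangle\,y_i$ with $\chi_1,\dots,\chi_m\in E'$ and $y_1,\dots,y_m\in L$. Then $U:=\{x\in E:\ |\langle\chi_i,x\rangle|\le 1 \text{ for all } i\le m\}$ is a zero neighborhood because each $\chi_i$ is continuous, and by construction $T(U)$ is contained in the bounded set $\{\sum_{i=1}^m a_iy_i:\ |a_i|\le 1\}\subseteq\spn\{y_1,\dots,y_m\}$. So it suffices to prove that any bounded subset $A$ of a finite-dimensional subspace of $L$ is simultaneously a $(p,q)$-limited and a $(p,q)$-$(V^\ast)$ subset of $L$, and then take this $U$ in the definition of the corresponding operator classes.

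The core estimate is uniform across the two set-classes. Fixing $M>0$ with $A\subseteq\{\sum_i a_iy_i:\ |a_i|\le M\}$, for any sequence $\{\eta_n\}_{n\in\w}$ in $L'$ one has $\sup_{a\in A}|\langle\eta_n,a\rangle|\le M\sum_{i=1}^m|\langle\eta_n,y_i\rangle|$. If $\{\eta_n\}_{n\in\w}$ is weak$^\ast$ $p$-summable then $(\langle\eta_n,y_i\rangle)_n\in\ell_p$ (or $c_0$) directly by definition; if $\{\eta_n\}_{n\in\w}$ is weakly $p$-summable in $L'_\beta$ the same membership follows by evaluating it against $y_i\in L\subseteq(L'_\beta)'$. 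Since $p\le q$ is built into the definition of $(p,q)$-limited and $(p,q)$-$(V^\ast)$ sets, we have $\ell_p\subseteq\ell_q$ (and $c_0\subseteq c_0$), so the finite sum, hence $(\sup_{a\in A}|\langle\eta_n,a\rangle|)_n$, lies in $\ell_q$ (or in $c_0$ if $q=\infty$). This shows $A$ is both $(p,q)$-limited and $(p,q)$-$(V^\ast)$, finishing the argument. I anticipate no genuine difficulty; the only points needing care are the reduction step (selecting $U$ so that $T(U)$ is a bounded subset of a finite-dimensional subspace) and the observation that the hypothesis $p\le q$ inherent in these set-classes is exactly what legitimizes the inclusion $\ell_p\subseteq\ell_q$ in the final estimate.
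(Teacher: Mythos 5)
Your proof is correct, and it splits the statement exactly as the paper does: the $(q',q)$-limited $p$-convergent and $(q',q)$-$(V^\ast)$ $p$-convergent claims are dispatched by citing Proposition \ref{p:weak*-p-convergent-weak-con}(i) (the paper does precisely this), and the $(p,q)$-limited and $(p,q)$-$(V^\ast)$ claims are reduced to showing that $T(U)$ lands in a ``small'' subset of $L$ for a suitable $U\in\Nn_0(E)$. Where you diverge is in how that second half is finished. The paper observes that $T(U)\subseteq\cacx(F)$ for a finite set $F\subseteq L$ and then invokes Lemma 3.1 of \cite{Gab-limited}, which asserts that such sets are $(p,q)$-limited (hence $(p,q)$-$(V^\ast)$, since weakly $p$-summable sequences in $L'_\beta$ are weak$^\ast$ $p$-summable). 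You instead write $T(x)=\sum_{i=1}^m\langle\chi_i,x\rangle y_i$ with $\chi_i\in E'$, take $U=\{x:|\langle\chi_i,x\rangle|\le 1,\ i\le m\}$, and prove from scratch that a bounded subset of a finite-dimensional subspace is $(p,q)$-limited and $(p,q)$-$(V^\ast)$, via the estimate $\sup_{a\in A}|\langle\eta_n,a\rangle|\le M\sum_{i=1}^m|\langle\eta_n,y_i\rangle|$ together with $y_i\in L\subseteq(L'_\beta)'$ and the inclusion $\ell_p\subseteq\ell_q$. This is a legitimate, self-contained replacement for the external citation: the paper's route is shorter and reuses machinery already developed in \cite{Gab-limited}, while yours makes the lemma independent of that reference and isolates exactly where the standing hypothesis $p\le q$ (built into the definition of $(p,q)$-limited and $(p,q)$-$(V^\ast)$ sets) enters, namely in the step $\ell_p\subseteq\ell_q$. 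The only points you state without proof --- continuity of the coordinate functionals $\chi_i$ (which follows because the range of $T$ is finite-dimensional and Hausdorff) and the identification of the dual of $L'_{w^\ast}$ with $L$ --- are standard and harmless.
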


\begin{proof}
Since $T$ is finite-dimensional, there are a finite subset $F$ of $L$ and $U\in \Nn_0(E)$ such that $T(U)\subseteq\cacx(F)$. Therefore, by Lemma 3.1 of \cite{Gab-limited}, 
$T(U)$ is a $(p,q)$-limited set and hence a  $(p,q)$-$(V^\ast)$ set. Whence $T$ is $(p,q)$-limited and  $(p,q)$-$(V^\ast)$.
Since, by (i) of Proposition \ref{p:weak*-p-convergent-weak-con}, $T$ is $p$-convergent it is trivially $(q',q)$-limited $p$-convergent and $(q',q)$-$(V^\ast)$ $p$-convergent.\qed
\end{proof}

The next proposition stands ideal properties of the classes of operators introduced in Definition \ref{def:limited-p-convergent}.

\begin{proposition} \label{p:weak-pq-convergent-ideal}
Let $p,q,q'\in[1,\infty]$,  $q'\leq q$,  $\lambda,\mu\in\IF$, the spaces $E_0$, $E$, $L_0$ and $L$ be locally convex, and let $Q\in\LL(E_0,E)$, $T,S\in\LL(E,L)$ and $R\in\LL(L,L_0)$. Then:
\begin{enumerate}
\item[{\rm(i)}] if $T$ and $S$ are weakly $(p,q)$-convergent, then so are $R\circ T\circ Q$ and $\lambda T+\mu S$;
\item[{\rm(ii)}] if $T$  and $S$ are  weak$^\ast$ $(p,q)$-convergent, then so are $R\circ T\circ Q$ and $\lambda T+\mu S$;
\item[{\rm(iii)}] if $T$  and $S$ are  $(p,q)$-limited operators, then so are $R\circ T\circ Q$ and $\lambda T+\mu S$;
\item[{\rm(iv)}] if $T$  and $S$ are  $(p,q)$-$(V^\ast)$ operators, then so are $R\circ T\circ Q$ and $\lambda T+\mu S$;
\item[{\rm(v)}] if $T$  and $S$ are  $(q',q)$-limited $p$-convergent, then so are $R\circ T\circ Q$ and $\lambda T+\mu S$;
\item[{\rm(vi)}] if $T$  and $S$ are  $(q',q)$-$(V^\ast)$ $p$-convergent, then so are $R\circ T\circ Q$ and $\lambda T+\mu S$;
\item[{\rm(vii)}] if $T$  and $S$ are  coarse $p$-limited, then so are $R\circ T\circ Q$ and $\lambda T+\mu S$.
\end{enumerate}
\end{proposition}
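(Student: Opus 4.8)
The plan is to reduce all seven assertions to a handful of functorial stability properties of continuous operators and then dispose of each case. Write $Q^\ast$ and $R^\ast$ for the transposes of $Q$ and $R$. Two preliminary facts drive everything. First, a continuous linear map carries weakly $p$-summable sequences to weakly $p$-summable sequences: if $f\in\LL(X,Y)$ and $\{x_n\}\in\ell_p^w(X)$, then $\langle\psi,f(x_n)\rangle=\langle f^\ast\psi,x_n\rangle$ for every $\psi\in Y'$, and $f^\ast\psi\in X'$, so $\{f(x_n)\}\in\ell_p^w(Y)$. Second, the transpose $R^\ast$ is continuous both as a map $(L_0')_{w^\ast}\to(L')_{w^\ast}$ and as a map $(L_0')_\beta\to(L')_\beta$; the strong continuity holds because $R$ sends bounded sets to bounded sets, so that $\|R^\ast\eta\|_B=\|\eta\|_{R(B)}$ for every bounded $B\subseteq L$. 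Combining the two facts, $R^\ast$ (and likewise $Q^\ast$) preserves both weak$^\ast$ $q$-summability and weak $q$-summability computed in the strong duals.

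Cases (i) and (ii) follow from the adjoint identity $\langle\eta_n,(R\circ T\circ Q)(x_n)\rangle=\langle R^\ast\eta_n,T(Q(x_n))\rangle$. Given a weakly (resp. weak$^\ast$) $q$-summable $\{\eta_n\}\subseteq L_0'$ and a weakly $p$-summable $\{x_n\}\subseteq E_0$, the two facts make $\{R^\ast\eta_n\}$ weakly (resp. weak$^\ast$) $q$-summable in $L'$ and $\{Q(x_n)\}$ weakly $p$-summable in $E$, so the right-hand side tends to $0$ by the hypothesis on $T$. For $\lambda T+\mu S$ the assertion is immediate from the bilinearity of the duality pairing together with the hypotheses on $T$ and $S$.

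For the set-defined classes (iii), (iv) and (vii) I would first record three closure properties shared by the classes of $(p,q)$-limited, $(p,q)$-$(V^\ast)$ and coarse $p$-limited sets: a subset of such a set is again such a set; finite sums and scalar multiples of such sets are again such sets (since the relevant $\sup$ over $A+B$ is dominated by the sum of the suprema, and $\ell_q$, $c_0$ and relative compactness are stable under sums and scalar multiples); and the image of such a set under a continuous operator is again such a set. For the last property, if $A\subseteq L$ is $(p,q)$-limited and $\{\psi_n\}\subseteq L_0'$ is weak$^\ast$ $p$-summable, then $\sup_{a\in A}|\langle\psi_n,R(a)\rangle|=\sup_{a\in A}|\langle R^\ast\psi_n,a\rangle|$ with $\{R^\ast\psi_n\}$ weak$^\ast$ $p$-summable; in the $(V^\ast)$ case one uses weak $p$-summability in the strong dual and the strong-transpose continuity instead; and for coarse $p$-limitedness one uses that $\phi\circ R\in\LL(L,\ell_p)$ (with $c_0$ in place of $\ell_p$ when $p=\infty$) whenever $\phi\in\LL(L_0,\ell_p)$, so that $\phi(R(A))=(\phi\circ R)(A)$ is relatively compact. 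Now choose $U_T,U_S\in\Nn_0(E)$ with $T(U_T)$ and $S(U_S)$ of the required type, and $V\in\Nn_0(E_0)$ with $Q(V)\subseteq U_T$; then $(R\circ T\circ Q)(V)\subseteq R(T(U_T))$ is of the required type, and $(\lambda T+\mu S)(U_T\cap U_S)\subseteq\lambda T(U_T)+\mu S(U_S)$ is of the required type as well.

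Finally, for (v) and (vi) I would transport the test sequence forward. If $\{y_n\}\subseteq E_0$ is weakly $p$-summable and its range is a $(q',q)$-limited (resp. $(q',q)$-$(V^\ast)$) set, then the first fact together with the image-stability above makes $\{Q(y_n)\}$ weakly $p$-summable in $E$ with range of the same type, so $T(Q(y_n))\to0$ by the hypothesis on $T$, and the continuity of $R$ yields $(R\circ T\circ Q)(y_n)\to0$; the linear-combination case is immediate since the single sequence is sent to $0$ by both $T$ and $S$. The only genuinely delicate point, which I would flag as the \emph{main obstacle}, is the image- and preimage-stability of the $(V^\ast)$-type classes: because these are defined through weakly $p$-summable sequences in the strong dual rather than weak$^\ast$ $p$-summable ones, their verification requires the continuity of the strong transpose, i.e. precisely the fact that a continuous operator maps bounded sets to bounded sets. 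Everything else reduces to the routine bilinearity, subset, and sum computations indicated above.
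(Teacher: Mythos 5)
Your proof is correct and takes essentially the same route as the paper's: the adjoint identity $\langle\eta_n,(R\circ T\circ Q)(x_n)\rangle=\langle R^\ast\eta_n,T(Q(x_n))\rangle$ together with preservation of weakly/weak$^\ast$ summable sequences under $Q$ and the (weak$^\ast$ and strongly continuous) transpose $R^\ast$ for (i)--(ii), image-stability of the three set classes under continuous operators for (iii), (iv), (vii), and forward transport of the test sequence through $Q$ followed by continuity of $R$ for (v)--(vi). The only difference is presentational: you prove inline (correctly) the stability facts that the paper imports by citation from \cite{Gab-Pel}, \cite{Gab-limited} and \cite{NaB}, and you spell out the $\lambda T+\mu S$ case, which the paper dismisses as trivial.
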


\begin{proof}
Since the case $\lambda T+\mu S$ is trivial, we consider the case $R\circ T\circ Q$.

(i) and (ii): Let $\{\eta_n\}_{n\in\w}$ be a weakly (resp., weak$^\ast$) $q$-summable sequence in $(L_0)'_\beta$,  and let $\{x_n\}_{n\in\w}$ be a  weakly $p$-summable sequence in $E_0$. Then, by (iii) of Lemma 4.6 of \cite{Gab-Pel}, 
the sequence $\{Q(x_n)\}_{n\in\w}$ is weakly $p$-summable in $E$. Since $R^\ast$ is weak$^\ast$ and  strongly continuous by  Theorems 8.10.5 and 8.11.3 of \cite{NaB}, respectively, Lemma 4.6 of of \cite{Gab-Pel} 
implies that the sequence $\{R^\ast(\eta_n)\}_{n\in\w}$ is weakly (resp., weak$^\ast$) $q$-summable in $L'_\beta$.
Now the definition of weakly (resp., weak$^\ast$  $(p,q)$-convergent operators  implies 
\[
\lim_{n\to\infty} \langle \eta_n,R\circ T\circ Q(x_n) \rangle=\lim_{n\to\infty} \langle R^\ast(\eta_n), T\big( Q(x_n)\big) \rangle=0.
\]
Thus $R\circ T\circ Q$ is weakly (resp., weak$^\ast$) $(p,q)$-convergent, as desired.
\smallskip

(iii) immediately follows from (iv) of Lemma 3.1 of \cite{Gab-limited}. 

(iv) immediately follows from  (iv) of Lemma 7.2 of \cite{Gab-Pel}. 
\smallskip

(v) and (vi): Let $\{x_n\}_{n\in\w}$ be a  weakly $p$-summable sequence in $E_0$ which is a $(q',q)$-limited (resp., $(q',q)$-$(V^\ast)$) subset of $E$. Then, by  Lemma 4.6 of of \cite{Gab-Pel} 
and (iv) of Lemma 3.1 of \cite{Gab-limited} 
 (resp., (iv) of Lemma 7.2 of \cite{Gab-Pel}), 
the sequence $\{Q(x_n)\}_{n\in\w}$ is a  weakly $p$-summable sequence in $E$ which is a $(q',q)$-limited (resp., $(q',q)$-$(V^\ast)$) subset of $E$. Since $T$ is $(q',q)$-limited (resp., $(q',q)$-$(V^\ast)$) $p$-convergent, it follows that $T(Q(x_n))\to 0$ in the space $L$. The continuity of $R$ implies that $R\circ T\circ Q(x_n)\to 0$ in $L_0$. Thus $R\circ T\circ Q$ is $(q',q)$-limited (resp., $(q',q)$-$(V^\ast)$) $p$-convergent.
\smallskip

(vii) immediately follows from (iii) of Lemma 4.1 of \cite{Gab-limited} 
(which states that continuous images of coarse $p$-limited sets are coarse $p$-limited). \qed
\end{proof}

\begin{corollary} \label{c:weak-p-convergent}
Let $p,q,q'\in[1,\infty]$,  $q'\leq q$, and let $E$ and $L$ be locally convex spaces. If the identity operator $\Id_E:E\to E$ is weakly $(p,q)$-convergent $($resp., weak$^\ast$ $(p,q)$-convergent, $(q',q)$-limited, $(q',q)$-$(V^\ast)$,  $(q',q)$-limited $p$-convergent, $(q',q)$-$(V^\ast)$ $p$-convergent or coarse $p$-limited$)$, then so is every $T\in\LL(E,L)$.
\end{corollary}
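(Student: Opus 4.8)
The plan is to read the corollary directly off the ideal properties already established in Proposition \ref{p:weak-pq-convergent-ideal}, using nothing beyond the factorization of an arbitrary operator through the identity. For any $T\in\LL(E,L)$ one has the trivial decomposition
\[
T \;=\; T\circ \Id_E\circ \Id_E,
\]
whose outer factors are the continuous operators $T\in\LL(E,L)$ and $\Id_E\in\LL(E,E)$, while the \emph{central} factor is $\Id_E\in\LL(E,E)$. The whole point is that the proposition transfers a class property from the middle factor to the full composition.

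Concretely, I would invoke Proposition \ref{p:weak-pq-convergent-ideal} with the substitutions $E_0:=E$, $L_0:=L$, $Q:=\Id_E$, $R:=T$, and with the middle operator taken to be $\Id_E$. Each of the seven properties listed for $\Id_E$ is exactly the hypothesis on the central operator in one of the clauses of the proposition (with the index pair relabelled where necessary): the two convergence properties match clauses (i)--(ii), the limited and $(V^\ast)$ operator properties match (iii)--(iv), the limited and $(V^\ast)$ $p$-convergent properties match (v)--(vi), and the coarse $p$-limited property matches (vii). Granting that $\Id_E$ belongs to the class under consideration, the relevant clause yields that $R\circ \Id_E\circ Q = T$ belongs to the same class. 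Since $T\in\LL(E,L)$ was arbitrary, the corollary follows.

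I expect no genuine obstacle here; the entire content has already been carried by Proposition \ref{p:weak-pq-convergent-ideal}, and the corollary is merely its specialization obtained by inserting the identity as the central operator. The only point deserving a word of care is the index bookkeeping: in each clause it is precisely the \emph{middle} factor whose membership in the class is used, whereas the outer factors $Q$ and $R$ are required only to be continuous linear maps, a role that $\Id_E$ and $T$ plainly fill. Because the underlying ideal argument is uniform in the indices, the appropriate admissible pair may be substituted in clauses (iii)--(vi) without any change to the reasoning.
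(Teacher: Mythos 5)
Your proof is correct and is essentially identical to the paper's own argument: the paper also writes $T = T\circ \Id_E \circ \Id_E$ and invokes Proposition \ref{p:weak-pq-convergent-ideal}, with the identity as the middle factor carrying the class property. Your additional remarks on the role of the outer factors and the index relabelling in clauses (iii)--(vi) are accurate but merely spell out what the paper leaves implicit.
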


\begin{proof}
The assertion follows from the equality $T=T\circ \Id_E \circ \Id_E$ and Proposition \ref{p:weak-pq-convergent-ideal}. \qed
\end{proof}

Corollary \ref{c:weak-p-convergent} motivates the study of locally convex spaces $E$ for which the identity operator $\Id_E:E\to E$ has one of the properties from the corollary. If the space $L$ contains an isomorphic copy of $\ell_\infty$ we can partially reverse Corollary \ref{c:weak-p-convergent} as follows. 
\begin{theorem} \label{t:weak-p-conv-limited}
Let $p,q,q'\in[1,\infty]$,  $q'\leq q$, and let  $E$ and $L$ be locally convex spaces. If $L$ contains an isomorphic copy of $\ell_\infty$, then the following assertions are equivalent:
\begin{enumerate}
\item[{\rm(i)}]   each weakly $p$-convergent operator from $E$ into $L$ is $(q',q)$-limited $p$-convergent;
\item[{\rm(ii)}] each weak$^\ast$ $p$-convergent operator from $E$ into $L$ is $(q',q)$-limited $p$-convergent;
\item[{\rm(iii)}] the identity operator $\Id_E: E\to E$ is $(q',q)$-limited $p$-convergent.
\end{enumerate}
\end{theorem}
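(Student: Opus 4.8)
The plan is to establish the three equivalences by proving the cycle (iii)$\Ra$(i)$\Ra$(ii)$\Ra$(iii), since the first two implications are almost immediate and the entire weight of the hypothesis ``$L$ contains a copy of $\ell_\infty$'' falls on the last one. For (iii)$\Ra$(i): if $\Id_E$ is $(q',q)$-limited $p$-convergent, then Corollary \ref{c:weak-p-convergent} gives that \emph{every} $T\in\LL(E,L)$ is $(q',q)$-limited $p$-convergent, in particular every weakly $p$-convergent one. For (i)$\Ra$(ii): by (ii) of Proposition \ref{p:weak*-p-convergent-weak-con} every weak$^\ast$ $p$-convergent operator is weakly $p$-convergent, so (i) instantly yields (ii). Thus only (ii)$\Ra$(iii) requires work.

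I would prove (ii)$\Ra$(iii) by contraposition. Assuming $\Id_E$ is not $(q',q)$-limited $p$-convergent, pick a weakly $p$-summable sequence $\{x_n\}_{n\in\w}$ which is a $(q',q)$-limited subset of $E$ but with $x_n\not\to 0$. After passing to a subsequence (which remains weakly $p$-summable and $(q',q)$-limited) choose $V\in\Nn_0^c(E)$ with $x_n\notin V$ for all $n$, and by the Hahn--Banach separation theorem select $f_n\in V^\circ$ with $|\langle f_n,x_n\rangle|\geq 1$. Since $\{f_n\}_{n\in\w}\subseteq V^\circ$ is equicontinuous, the rule $T(x):=(\langle f_m,x\rangle)_{m\in\w}$ defines a continuous operator $T\colon E\to\ell_\infty$ (it maps $V$ into the unit ball of $\ell_\infty$), and $\|T(x_n)\|_\infty\geq|\langle f_n,x_n\rangle|\geq 1$, so $T(x_n)\not\to 0$. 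Composing with the isomorphic embedding $j\colon\ell_\infty\to L$ we obtain $j\circ T\in\LL(E,L)$ with $(j\circ T)(x_n)\not\to 0$; as $\{x_n\}$ is weakly $p$-summable and $(q',q)$-limited, this witnesses that $j\circ T$ is \emph{not} $(q',q)$-limited $p$-convergent.

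The decisive point, and the step I expect to be the main obstacle, is to verify that $j\circ T$ \emph{is} weak$^\ast$ $p$-convergent, which then contradicts (ii). Here is where the copy of $\ell_\infty$ is genuinely used, and where mapping into the \emph{full} $\ell_\infty$ rather than into $c_0$ is essential: for $c_0$-valued operators the coordinate functionals $e_m^\ast$ are weak$^\ast$ null in $c_0'=\ell_1$ and the diagonal pairing need not vanish, whereas the $e_m^\ast$ are \emph{not} weak$^\ast$ null in $\ell_\infty'$, so this obstruction disappears. Concretely, I would invoke that $\ell_\infty$ is a Grothendieck space and has the Dunford--Pettis property. Given a weak$^\ast$ null sequence $\{\eta_k\}$ in $\ell_\infty'$ and a weakly $p$-summable (hence weakly null) sequence $\{x_k\}$ in $E$, weak continuity of $T$ gives $T(x_k)\to 0$ weakly in $\ell_\infty$; the Grothendieck property upgrades $\eta_k\to 0$ to \emph{weak} convergence in $\ell_\infty'$; and the Dunford--Pettis property then forces $\langle\eta_k,T(x_k)\rangle\to 0$. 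This shows $T$ is in fact weak$^\ast$ $\infty$-convergent (weak$^\ast$ Dunford--Pettis), a fortiori weak$^\ast$ $p$-convergent, regardless of $E$.

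Finally, by the ideal property (ii) of Proposition \ref{p:weak-pq-convergent-ideal}, weak$^\ast$ $p$-convergence of $T\colon E\to\ell_\infty$ passes to $j\circ T=j\circ T\circ\Id_E\colon E\to L$. Thus $j\circ T$ is a weak$^\ast$ $p$-convergent operator that is not $(q',q)$-limited $p$-convergent, contradicting (ii); hence (ii)$\Ra$(iii) and the cycle closes. The only nonroutine ingredient is the crux paragraph above, namely the reduction of weak$^\ast$ $p$-convergence of operators into $\ell_\infty$ to the Grothendieck and Dunford--Pettis properties of $\ell_\infty$; everything else is bookkeeping with the definitions and the already-established ideal and inclusion properties.
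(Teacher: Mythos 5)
Your proposal is correct and follows essentially the same route as the paper: the same cycle of implications, with (ii)$\Rightarrow$(iii) proved by the same construction (Hahn--Banach functionals in $V^\circ$, equicontinuity yielding an operator into $\ell_\infty$, composition with the embedding, and the Grothendieck plus Dunford--Pettis properties of $\ell_\infty$ to establish weak$^\ast$ $p$-convergence of the composite). The only cosmetic difference is that you verify weak$^\ast$ $p$-convergence for $T:E\to\ell_\infty$ and then transfer it to $j\circ T$ via the ideal property of Proposition \ref{p:weak-pq-convergent-ideal}, whereas the paper argues directly for $R\circ Q:E\to L$ by pulling weak$^\ast$ null sequences in $L'$ back through $R^\ast$.
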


\begin{proof}
(i)$\Ra$(ii) follows from (ii) of Proposition \ref{p:weak*-p-convergent-weak-con}.

(ii)$\Ra$(iii) Suppose for a contradiction  that $\Id_E$ is not  $(q',q)$-limited $p$-convergent. Then there is a  $(q',q)$-limited weakly $p$-summable sequence $S=\{x_n\}_{n\in\w}$ in $E$ which does not converge to zero in $E$. Without loss of generality we can assume that $S\cap U=\emptyset$ for some closed absolutely convex neighborhood $U$ of zero in $E$. Since $S$, being also a weakly null-sequence, is a bounded subset of $E$, there is $a>1$ such that $S\subseteq aU$. For every $n\in\w$, by the Hahn--Banach separation theorem, choose $\chi_n\in U^\circ$ such that $|\langle\chi_n,x_n\rangle|>1$. Therefore
\begin{equation} \label{equ:weak-p-conver-1}
1< |\langle\chi_n,x_n\rangle|\leq a \;\; \mbox{ for every }\; n\in\w.
\end{equation}
Since $\{\chi_n\}_{n\in\w}\subseteq U^\circ$, the sequence $\{\chi_n\}_{n\in\w}$ is equicontinuous. Therefore, by Lemma 14.13 of \cite{Gab-Pel}, 
the linear map
\[
Q: E\to \ell_\infty, \quad Q(x):= \big( \langle\chi_n,x\rangle\big)_{n\in\w},
\]
is continuous.

By assumption, there is an embedding $R:\ell_\infty\to L$. Consider the operator $T:=R\circ Q: E\to L$. We claim that $T$ is weak$^\ast$ $p$-convergent. Indeed, let $(y_n)_{n\in\w}\in \ell_p^w(E)$  (or $(y_n)_{n\in\w}\in c_0^w(E)$  if $p=\infty$) and let $\{\eta_n\}_{n\in\w}$ be a weak$^\ast$ null-sequence in $(\ell_\infty)'$. Then $\{ R^\ast(\eta_n)\}_{n\in\w}$ is also a weak$^\ast$ null-sequence in $(\ell_\infty)'$. Therefore, by the Grothendieck property of $\ell_\infty$, $\{ R^\ast(\eta_n)\}_{n\in\w}$ is weakly null in the Banach dual space $(\ell_\infty)'_\beta$. Since $\{y_n\}_{n\in\w}$ is weakly null, it follows that $\{Q(y_n)\}_{n\in\w}\subseteq \ell_\infty$ is also weakly null. Therefore, by the Dunford--Pettis property of $\ell_\infty$, we obtain
\[
\lim_{n\to\infty} \langle\eta_n, T(y_n)\rangle=\lim_{n\to\infty} \langle R^\ast(\eta_n), Q(y_n)\rangle=0.
\]
Thus $T$ is a weak$^\ast$ $p$-convergent operator.

To get a desired contradiction it remains to prove that $T$ is not $(q',q)$-limited $p$-convergent by showing that $T(x_k)\not\to 0$. To this end, choose $W\in\Nn_0(L)$ such that $W\cap R(\ell_\infty) \subseteq R(B_{\ell_\infty})$. The inequalities (\ref{equ:weak-p-conver-1}) and the bijectivity of $R$ imply
\[
T(x_k)=R\circ Q(x_k)= R\big( \langle\chi_n,x_k\rangle\big)\not\in R(B_{\ell_\infty}) \;\; \mbox{ for every } \; k\in\w.
\]
Since the range of $T$ is contained in $R(\ell_\infty)$ and $R$ is an embedding the choice of $W$ implies that $T(x_k)\not\in W$ for all $k\in\w$. Thus $T$ is not $(q',q)$-limited $p$-convergent.

(iii)$\Ra$(i) follows from Corollary \ref{c:weak-p-convergent}. \qed
\end{proof}

 Corollary \ref{c:weak-p-convergent}  and Theorem \ref{t:weak-p-conv-limited} immediately imply
\begin{corollary} \label{c:Id-p-limited}
Let $p,q,q'\in[1,\infty]$,  $q'\leq q$, and let  $E$ be a locally convex space. Then the identity operator $\Id_E: E\to E$ is $(q',q)$-limited $p$-convergent if and only if so is any operator $T:E\to\ell_\infty$.
\end{corollary}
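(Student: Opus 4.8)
The plan is to deduce Corollary \ref{c:Id-p-limited} from the two results it is built on, namely Corollary \ref{c:weak-p-convergent} and Theorem \ref{t:weak-p-conv-limited}, rather than to argue from scratch. The statement asserts the equivalence of two conditions: that $\Id_E$ is $(q',q)$-limited $p$-convergent, and that \emph{every} operator $T:E\to\ell_\infty$ is $(q',q)$-limited $p$-convergent. I would prove the two implications separately, observing first that $\ell_\infty$ is a legitimate choice for the target space $L$ in both of the cited results: it is a Banach (hence locally convex) space, and it trivially contains an isomorphic copy of itself, so the hypothesis ``$L$ contains an isomorphic copy of $\ell_\infty$'' required by Theorem \ref{t:weak-p-conv-limited} is satisfied with $L=\ell_\infty$.

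For the forward implication, I would invoke Corollary \ref{c:weak-p-convergent} directly. That corollary states that if $\Id_E$ is $(q',q)$-limited $p$-convergent, then so is every $T\in\LL(E,L)$ for an arbitrary locally convex target $L$. Specializing $L$ to $\ell_\infty$ gives at once that every operator $T:E\to\ell_\infty$ is $(q',q)$-limited $p$-convergent, which is exactly one direction of the claimed equivalence.

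For the reverse implication I would apply Theorem \ref{t:weak-p-conv-limited} with $L=\ell_\infty$. Suppose every operator $T:E\to\ell_\infty$ is $(q',q)$-limited $p$-convergent. In particular, every weak$^\ast$ $p$-convergent operator from $E$ into $\ell_\infty$ is $(q',q)$-limited $p$-convergent, which is precisely condition (ii) of Theorem \ref{t:weak-p-conv-limited} for the choice $L=\ell_\infty$. The equivalence (ii)$\Leftrightarrow$(iii) of that theorem then yields that $\Id_E:E\to E$ is $(q',q)$-limited $p$-convergent, completing the argument.

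Since the corollary is a direct formal consequence of two already-established results, there is no genuine obstacle to overcome; the only point requiring a moment's care is the verification that $\ell_\infty$ qualifies as an admissible target space in both cited statements and, in particular, that it contains an isomorphic copy of $\ell_\infty$ so that the standing hypothesis of Theorem \ref{t:weak-p-conv-limited} is met. I would state this explicitly at the outset so that the reader sees why both the general Corollary \ref{c:weak-p-convergent} and the $\ell_\infty$-specific Theorem \ref{t:weak-p-conv-limited} are simultaneously applicable.
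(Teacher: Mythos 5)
Your proposal is correct and matches the paper's own argument, which derives the corollary exactly as you do: the forward direction from Corollary \ref{c:weak-p-convergent} with $L=\ell_\infty$, and the reverse direction from Theorem \ref{t:weak-p-conv-limited} applied with $L=\ell_\infty$ (which trivially contains a copy of itself), using that the hypothesis implies condition (ii) and hence (iii). The paper compresses this into the single phrase ``immediately imply,'' so your write-up is just a more explicit rendering of the same proof.
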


Below we obtain an analogous  characterization of locally convex spaces $E$ for which the identity map $\Id_E$ is $(q',q)$-$(V^\ast)$ $p$-convergent. We omit its proof because it can be obtained from the proof of  Theorem \ref{t:weak-p-conv-limited} just replacing ``$(q',q)$-limited'' by ``$(q',q)$-$(V^\ast)$''.

\begin{theorem} \label{t:weak-p-conv-V*}
Let $p,q,q'\in[1,\infty]$,  $q'\leq q$,  and let  $E$ and $L$ be locally convex spaces. If $L$ contains an isomorphic copy of $\ell_\infty$, then the following assertions are equivalent:
\begin{enumerate}
\item[{\rm(i)}]   each weakly $p$-convergent operator from $E$ into $L$ is $(q',q)$-$(V^\ast)$ $p$-convergent;
\item[{\rm(ii)}] each weak$^\ast$ $p$-convergent operator from $E$ into $L$ is $(q',q)$-$(V^\ast)$ $p$-convergent;
\item[{\rm(iii)}] the identity operator $\Id_E: E\to E$ is $(q',q)$-$(V^\ast)$ $p$-convergent.
\end{enumerate}
\end{theorem}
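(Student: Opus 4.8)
The plan is to run the same three-implication cycle as in Theorem \ref{t:weak-p-conv-limited}, carrying out each step with ``$(q',q)$-$(V^\ast)$'' in place of ``$(q',q)$-limited''. The implication (i)$\Ra$(ii) is immediate from (ii) of Proposition \ref{p:weak*-p-convergent-weak-con}, since every weak$^\ast$ $p$-convergent operator is weakly $p$-convergent and hence falls under the hypothesis of (i). Likewise (iii)$\Ra$(i) is nothing but Corollary \ref{c:weak-p-convergent}, whose statement already lists the $(q',q)$-$(V^\ast)$ $p$-convergent property. So the entire content lies in (ii)$\Ra$(iii).

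I would prove (ii)$\Ra$(iii) by contraposition. Assuming $\Id_E$ is not $(q',q)$-$(V^\ast)$ $p$-convergent, fix a weakly $p$-summable sequence $S=\{x_n\}_{n\in\w}$ that is a $(q',q)$-$(V^\ast)$ subset of $E$ but does not converge to zero; passing to a closed absolutely convex $U\in\Nn_0^c(E)$ with $S\cap U=\emptyset$ and applying the Hahn--Banach separation theorem, select $\chi_n\in U^\circ$ with $|\langle\chi_n,x_n\rangle|>1$. As $\{\chi_n\}_{n\in\w}\subseteq U^\circ$ is equicontinuous, Lemma 14.13 of \cite{Gab-Pel} yields a continuous map $Q:E\to\ell_\infty$, $Q(x):=(\langle\chi_n,x\rangle)_{n\in\w}$; composing with the assumed embedding $R:\ell_\infty\to L$ gives the candidate operator $T:=R\circ Q$.

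The two remaining checks are those of Theorem \ref{t:weak-p-conv-limited}, and crucially neither uses any property of $S$ beyond weak $p$-summability and non-nullity. First, $T$ is weak$^\ast$ $p$-convergent: for $(y_n)_{n\in\w}\in\ell_p^w(E)$ (or $c_0^w(E)$ if $p=\infty$) and a weak$^\ast$ null $\{\eta_n\}_{n\in\w}$ in $(\ell_\infty)'$, the Grothendieck property of $\ell_\infty$ makes $\{R^\ast(\eta_n)\}_{n\in\w}$ weakly null in $(\ell_\infty)'_\beta$, while $Q$ carries the weakly null $\{y_n\}_{n\in\w}$ to a weakly null sequence in $\ell_\infty$; the Dunford--Pettis property of $\ell_\infty$ then forces $\langle\eta_n,T(y_n)\rangle\to0$. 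Second, $T(x_k)\not\to0$: the bound $|\langle\chi_k,x_k\rangle|>1$ on the $k$-th coordinate together with the bijectivity of $R$ onto its image gives $T(x_k)\not\in R(B_{\ell_\infty})$, so choosing $W\in\Nn_0(L)$ with $W\cap R(\ell_\infty)\subseteq R(B_{\ell_\infty})$ we obtain $T(x_k)\not\in W$ for all $k\in\w$. Since $S$ is a $(q',q)$-$(V^\ast)$ weakly $p$-summable sequence, this exhibits $T$ as not $(q',q)$-$(V^\ast)$ $p$-convergent, contradicting (ii).

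I do not expect a genuine obstacle here. The only point where the distinction between $(V^\ast)$ sets and limited sets could enter is in certifying that the witnessing sequence has the correct type, and there it is supplied directly by the hypothesis on $\Id_E$; the operator-theoretic core---building $Q$ and $T$ and proving $T$ weak$^\ast$ $p$-convergent---is insensitive to that distinction, because the defining class of test sequences never appears in it. The single nontrivial ingredient, exactly as in Theorem \ref{t:weak-p-conv-limited}, is that $\ell_\infty$ simultaneously enjoys the Grothendieck and the Dunford--Pettis properties, which is what drives the weak$^\ast$ $p$-convergence of $T$; everything else is a mechanical transcription.
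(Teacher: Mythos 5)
Your proposal is correct and coincides with the paper's own treatment: the paper omits the proof of Theorem \ref{t:weak-p-conv-V*} precisely because it is the proof of Theorem \ref{t:weak-p-conv-limited} with ``$(q',q)$-limited'' replaced by ``$(q',q)$-$(V^\ast)$'', which is exactly the cycle (i)$\Ra$(ii)$\Ra$(iii)$\Ra$(i) you carry out, including the construction $T=R\circ Q$ and the Grothendieck/Dunford--Pettis argument. Your added observation that the operator-theoretic core never uses the type of the witnessing sequence is the correct justification for why the substitution works.
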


 Corollary \ref{c:weak-p-convergent}  and Theorem \ref{t:weak-p-conv-V*} immediately imply
\begin{corollary} \label{c:Id-p-V*}
Let $p,q,q'\in[1,\infty]$,  $q'\leq q$, and let  $E$ be a locally convex space. Then the identity operator $\Id_E: E\to E$ is $(q',q)$-$(V^\ast)$ $p$-convergent if and only if so is any operator $T:E\to\ell_\infty$.
\end{corollary}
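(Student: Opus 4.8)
The plan is to derive both implications directly by specializing the range space to $\ell_\infty$ in the two preceding results; no genuinely new argument is needed beyond correct bookkeeping of which cited result supplies which direction.

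First I would treat the ``only if'' direction. Assuming $\Id_E : E \to E$ is $(q',q)$-$(V^\ast)$ $p$-convergent, I would apply Corollary \ref{c:weak-p-convergent} with the target space taken to be $L := \ell_\infty$. Since that corollary asserts that this property of $\Id_E$ propagates to every $T \in \LL(E,L)$, it yields at once that every operator $T : E \to \ell_\infty$ is $(q',q)$-$(V^\ast)$ $p$-convergent, which is exactly the claim in this direction.

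For the ``if'' direction I would invoke Theorem \ref{t:weak-p-conv-V*}, again with $L := \ell_\infty$. The standing hypothesis of that theorem---that $L$ contains an isomorphic copy of $\ell_\infty$---is satisfied trivially for $L = \ell_\infty$, the identity map providing an (isometric, hence isomorphic) embedding. The assumption here is that \emph{every} operator $T : E \to \ell_\infty$ is $(q',q)$-$(V^\ast)$ $p$-convergent; in particular this holds for the subclass of weakly $p$-convergent operators from $E$ into $\ell_\infty$, which is precisely assertion (i) of the theorem. The equivalence (i)$\Leftrightarrow$(iii) of Theorem \ref{t:weak-p-conv-V*} then delivers that $\Id_E$ is $(q',q)$-$(V^\ast)$ $p$-convergent, completing the proof.

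The only point requiring any care---and it is not a genuine obstacle---is the logical direction of the ``a fortiori'' step in the converse: one must notice that quantifying over \emph{all} operators into $\ell_\infty$ is a stronger hypothesis than quantifying only over the weakly $p$-convergent ones, so that the assumption indeed supplies condition (i) of the theorem rather than merely being a consequence of it. Everything else is a formal specialization of the cited results to the case $L = \ell_\infty$, exactly as the phrase ``immediately imply'' preceding the statement indicates.
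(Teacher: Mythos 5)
Your proof is correct and is exactly the argument the paper intends: the forward direction is Corollary \ref{c:weak-p-convergent} specialized to $L=\ell_\infty$, and the converse is the implication (i)$\Rightarrow$(iii) of Theorem \ref{t:weak-p-conv-V*} with $L=\ell_\infty$, noting that the hypothesis over all operators into $\ell_\infty$ subsumes assertion (i). Nothing further is needed.
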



Let $p\in[1,\infty]$. We shall say that a locally convex space $E$ is ({\em weakly}) {\em sequentially  locally  $p$-complete} if the closed absolutely convex hull of a weakly $p$-summable sequence is weakly sequentially $p$-compact (resp., weakly sequentially $p$-precompact). It is clear that if $p=\infty$ and $E$ is weakly angelic (for example, $E$ is a strict $(LF)$-space), then $E$ is sequentially  locally  $\infty$-complete if and only if it is locally complete.

Now we characterize weak$^\ast$ $p$-convergent operators.

\begin{theorem} \label{t:weak*-p-convergent}
Let $p\in[1,\infty]$, $E$ and $L$ be locally convex spaces,  and let  $T:E\to L$ be an operator. 
Consider the following assertions:
\begin{enumerate}
\item[{\rm(i)}] $T$ is weak$^\ast$ $p$-convergent;
\item[{\rm(ii)}] $T$ transforms weakly sequentially $p$-precompact subsets of $E$ to limited subsets of $L$;
\item[{\rm(iii)}] $T$ transforms {\rm(}relatively{\rm)} weakly sequentially $p$-compact subsets of $E$ to limited subsets of $L$;
\item[{\rm(iv)}] $T$ transforms weakly $p$-summable sequence of $E$ to limited subsets of $L$;
\item[{\rm(v)}] $S\circ T$ is $p$-convergent for each $S\in \LL(L,Z)$ and any locally convex {\rm(}or the same, Banach{\rm)} space $Z$ such that $U^\circ$ is weak$^\ast$ selectively sequentially pseudocompact for every  $U\in\Nn_0(Z)$; 
\item[{\rm(vi)}] $S\circ T$ is $p$-convergent for each $S\in \LL(L,c_0)$;
\item[{\rm(vii)}] for any normed space $X$ and each weakly sequentially $p$-precompact operator $R:X\to E$, the operator $T\circ R$ is limited;
\item[{\rm(viii)}] for any normed space $X$ and each weakly sequentially $p$-precompact operator $R:X\to E$, the adjoint $R^\ast \circ T^\ast: L'_{w^\ast}\to X'_\beta$ is $\infty$-convergent;
\item[{\rm(ix)}] if $R\in \LL(\ell_1^0,E)$ is weakly sequentially $p$-precompact, then $T\circ R$ is limited;
\item[{\rm(x)}]  for every normed space $Z$ and each weakly sequentially $p$-compact operator $S$ from $Z$ to $E$, the composition $T\circ S$ is a limited linear map;
\item[{\rm(xi)}]  for any operator $S\in \LL(\ell_{p^\ast}, E)$, the linear map $T\circ S$ is limited.
\end{enumerate}
Then:
\begin{enumerate}
\item[{\rm(A)}] {\rm(i)$\Leftrightarrow$(ii)$\Leftrightarrow$(iii)$\Leftrightarrow$(iv)};
\item[{\rm(B)}] {\rm(i)$\Ra$(v)$\Ra$(vi)}, and if $L$ is $c_0$-barrelled, then {\rm(vi)$\Ra$(i)};
\item[{\rm(C)}] {\rm(ii)$\Ra$(vii)$\Leftrightarrow$(viii)$\Ra$(ix)}, and if $E$ is weakly sequentially locally $p$-complete, then {\rm(ix)$\Ra$(i)};
\item[{\rm(D)}] if $1<p<\infty$, then {\rm(iii)$\Ra$(x)$\Ra$(xi)}, and if $E$ is sequentially complete, then {\rm(xi)$\Ra$(i)}.
\end{enumerate}
\end{theorem}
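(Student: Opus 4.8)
The plan is to prove block (A) as the cycle (i)$\Ra$(ii)$\Ra$(iii)$\Ra$(iv)$\Ra$(i) and to reduce blocks (B)--(D) to (A) together with the ideal properties of Proposition \ref{p:weak-pq-convergent-ideal} and with factorizations through $c_0$, $\ell_1^0$ and $\ell_{p^\ast}$. The backbone is the elementary observation that a sequence $\{y_n\}_{n\in\w}$ in $L$ is a limited set \emph{if and only if} $\langle\eta_k,y_{n_k}\rangle\to 0$ for every weak$^\ast$ null sequence $\{\eta_k\}_{k\in\w}$ in $L'$ and every index sequence $(n_k)$; the nontrivial direction follows by extracting, from a witness of non-limitedness, either a fixed point (contradicting weak$^\ast$ nullity) or a strictly increasing selection. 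Granting this, (iv)$\Ra$(i) is immediate, since $|\langle\eta_n,T(x_n)\rangle|\le\sup_m|\langle\eta_n,T(x_m)\rangle|\to 0$; (iii)$\Ra$(iv) holds because for weakly $p$-summable $\{x_n\}_{n\in\w}$ the set $\{x_n\}_{n\in\w}\cup\{0\}$ is weakly sequentially $p$-compact (every subsequence of a weakly $p$-summable sequence is weakly $p$-summable, hence weakly $p$-convergent to $0$); and (ii)$\Ra$(iii) holds because every weakly $p$-convergent sequence is weakly $p$-Cauchy, so weakly sequentially $p$-compact sets are weakly sequentially $p$-precompact.

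The main obstacle is (i)$\Ra$(ii). Here I would first transpose: setting $\varphi_k:=T^\ast(\eta_k)$, a weak$^\ast$ null $\{\eta_k\}_{k\in\w}$ in $L'$ yields a weak$^\ast$ null $\{\varphi_k\}_{k\in\w}$ in $E'$ with $\langle\varphi_k,x_k\rangle=\langle\eta_k,T(x_k)\rangle\to 0$ on every weakly $p$-summable $\{x_k\}_{k\in\w}$ by (i), while $\sup_{a\in A}|\langle\eta_k,T(a)\rangle|=\sup_{a\in A}|\langle\varphi_k,a\rangle|$. Thus it suffices to show such $\{\varphi_k\}_{k\in\w}$ tends to $0$ uniformly on any weakly sequentially $p$-precompact $A$. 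Assuming the contrary, I pick $a_k\in A$ with $|\langle\varphi_k,a_k\rangle|\ge\e$, pass to a weakly $p$-Cauchy subsequence of $(a_k)$, and then perform a fast-growth selection $k_1<k_2<\cdots$ ensuring $|\langle\varphi_{k_{2i}},a_{k_{2i-1}}\rangle|<\e/4$ (possible since $a_{k_{2i-1}}$ is fixed before $k_{2i}$ is chosen and $\varphi_m\to 0$ pointwise). As $(k_{2i})$ and $(k_{2i-1})$ are strictly increasing, $(a_{k_{2i}}-a_{k_{2i-1}})_i$ is weakly $p$-summable, so the diagonal property forces $\langle\varphi_{k_{2i}},a_{k_{2i}}-a_{k_{2i-1}}\rangle\to 0$; yet this quantity has modulus $\ge\e-\e/4$, a contradiction. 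This cross-term--killing selection is the one genuinely delicate point.

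For block (B), Proposition \ref{p:weak-pq-convergent-ideal}(ii) gives that $S\circ T$ is again weak$^\ast$ $p$-convergent, whence (i)$\Ra$(v) is exactly the argument proving (ii)$\Ra$(iii) in Proposition \ref{p:weak*-p-convergent=weak-con} (which uses only weak$^\ast$ $p$-convergence of $S\circ T$ and weak$^\ast$ selective sequential pseudocompactness of the polars $U^\circ$); (v)$\Ra$(vi) is the specialization $Z=c_0$, whose dual ball is weak$^\ast$ metrizable, hence weak$^\ast$ sequentially compact and a fortiori weak$^\ast$ selectively sequentially pseudocompact. For (vi)$\Ra$(i) under $c_0$-barrelledness, a purported witness $\{\eta_n\},\{x_n\}$ produces an equicontinuous weak$^\ast$ null $\{\eta_n\}_{n\in\w}$, so $S(y):=(\langle\eta_n,y\rangle)_{n\in\w}$ is a continuous operator $L\to c_0$ (Lemma 14.13 of \cite{Gab-Pel}), and $p$-convergence of $S\circ T$ gives $\sup_m|\langle\eta_m,T(x_n)\rangle|\to 0$, contradicting $\langle\eta_n,T(x_n)\rangle\not\to 0$. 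In block (C), (ii)$\Ra$(vii) and (vii)$\Ra$(ix) are the direct specializations of the image condition to $R(U)$ and to $X=\ell_1^0$, while (vii)$\Leftrightarrow$(viii) is the adjoint computation $\sup_{x\in U}|\langle\eta_k,T R x\rangle|=\|R^\ast T^\ast(\eta_k)\|_{X'_\beta}$, which identifies ``$T\circ R$ limited'' with ``$R^\ast T^\ast$ carries weak$^\ast$ null sequences to $\beta$-null sequences'', i.e. $\infty$-convergence of $R^\ast\circ T^\ast$.

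Finally, the implications (ix)$\Ra$(i), (xi)$\Ra$(i) and (x)$\Ra$(xi) carry the remaining technical weight and are of one type. For (ix)$\Ra$(i), given a weakly $p$-summable witness $\{x_n\}_{n\in\w}$, I define $R:\ell_1^0\to E$ by $R(e_n):=x_n$; boundedness of $\{x_n\}_{n\in\w}$ makes $R$ continuous, and $R(B_{\ell_1^0})\subseteq\cacx\{x_n\}_{n\in\w}$ is weakly sequentially $p$-precompact precisely because $E$ is weakly sequentially locally $p$-complete, so (ix) forces $T\circ R$, hence $\{T(x_n)\}_{n\in\w}$, to be limited, killing the witness. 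For (xi)$\Ra$(i) the same scheme uses $S:\ell_{p^\ast}\to E$ with $S(e_n^\ast):=x_n$, well defined and continuous because sequential completeness of $E$ makes $\sum a_nx_n$ converge for $(a_n)\in\ell_{p^\ast}$; and for (x)$\Ra$(xi) (with $1<p<\infty$) one verifies that the unit ball of $\ell_{p^\ast}$ is relatively weakly sequentially $p$-compact---its unit vectors are weakly $p$-summable, and reflexivity together with a Bessaga--Pe\l czy\'nski selection yields a weakly $p$-convergent subsequence of any sequence---so each $S\in\LL(\ell_{p^\ast},E)$ is a weakly sequentially $p$-compact operator to which (x) applies (via Lemma 4.6 of \cite{Gab-Pel} for the image). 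The hypotheses ``weakly sequentially locally $p$-complete'' and ``sequentially complete'' enter exactly where these factoring operators must be shown to have the required precompactness and continuity, and checking these is the chief difficulty in block (D).
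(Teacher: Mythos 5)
Your proposal is correct and takes essentially the same approach as the paper: the same implication scheme, the same weakly $p$-Cauchy staggered-selection contradiction for (i)$\Ra$(ii), the same Hahn--Banach plus selective-sequential-pseudocompactness argument for (v), the same $c_0$-factorization under $c_0$-barrelledness for (vi)$\Ra$(i), and the same $\ell_1^0$- and $\ell_{p^\ast}$-factorizations in blocks (C) and (D). The only deviations are organizational: you obtain (i)$\Ra$(v) by combining the ideal property (Proposition \ref{p:weak-pq-convergent-ideal}) with the argument of Proposition \ref{p:weak*-p-convergent=weak-con}, and you argue directly several facts the paper simply cites from \cite{Gab-Pel}, \cite{Gab-limited} and \cite{CS} (the adjoint identity behind (vii)$\Leftrightarrow$(viii), continuity of the factorization operators, and weak sequential $p$-compactness of $B_{\ell_{p^\ast}}$).
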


\begin{proof}
(i)$\Ra$(ii) Let $A$ be a weakly sequentially $p$-precompact subset of $E$, and suppose for a contradiction that $T(A)$ is not limited. Therefore there are a weak$^\ast$ null sequence $\{\eta_n\}_{n\in\w}$ in $L'$, a sequence $\{x_n\}_{n\in\w}$ in  $A$ and $\e>0$ such that $|\langle\eta_n,T(x_n)\rangle |>\e$ for every $n\in\w$. Since $A$ is weakly sequentially $p$-precompact, without loss of generality we assume that $\{x_n\}_{n\in\w}$ is weakly $p$-Cauchy.

For $n_0=0$, since $\{\eta_n\}_{n\in\w}$ is weak$^\ast$ null we can choose $n_1>n_0$ such that $|\langle\eta_{n_1},T(x_{n_0})\rangle |<\tfrac{\e}{2}$. Proceeding by induction on $k$, we can choose $n_{k+1}>n_k$ such that $|\langle\eta_{n_{k+1}},T(x_{n_k})\rangle |<\tfrac{\e}{2}$. Since the sequence $\{x_{n_{k+1}} -x_{n_k}\}_{k\in\w}$ is weakly $p$-summable and $T$ is weak$^\ast$ $p$-convergent, we obtain
\[
\big\langle \eta_{n_{k+1}}, T\big(x_{n_{k+1}} -x_{n_k}\big)\big\rangle \to 0.
\]
On the other hand,
\[
\big|\big\langle \eta_{n_{k+1}}, T\big(x_{n_{k+1}} -x_{n_k}\big)\big\rangle\big| \geq \big|\big\langle \eta_{n_{k+1}}, T\big(x_{n_{k+1}}\big)\big\rangle\big| - \big|\big\langle \eta_{n_{k+1}}, T\big(x_{n_k}\big)\big\rangle\big|> \tfrac{\e}{2},
\]
a contradiction.
\smallskip

(ii)$\Ra$(iii) and (iii)$\Ra$(iv) are trivial.
\smallskip

(iv)$\Ra$(i) follows from the definition of limited sets.
\smallskip

(i)$\Ra$(v) Assume that $T$ is weak$^\ast$ $p$-convergent, and suppose for a contradiction that $S\circ T$ is not $p$-convergent for some $S\in \LL(L,Z)$ and some locally convex  (resp., Banach) space $Z$ such that $U^\circ$ is weak$^\ast$ selectively sequentially pseudocompact for every $U\in\Nn_0(Z)$. Then there are a weakly $p$-summable sequence $\{x_n\}_{n\in\w}$ in $E$ and a closed absolutely convex neighborhood $V\subseteq Z$ of zero such that $ST(x_n)\not\in V$ for every $n\in\w$. By the Hahn--Banach separation theorem, for every $n\in\w$ there is $\eta_n\in V^\circ$ such that $|\langle\eta_n, ST(x_n)\rangle|>1$. For every $n\in\w$, set
\[
U_n:= \{ \chi\in V^\circ: |\langle\chi, ST(x_n)\rangle|>1\}.
\]
Then $U_n$ is a weak$^\ast$ open neighborhood of $\eta_n$ in $V^\circ$. Since $V^\circ$ is  selectively sequentially pseudocompact in the weak$^\ast$ topology, for every $n\in\w$ there exists $\chi_n\in U_n$ such that the sequence $\{\chi_n\}_{n\in\w}$ contains a subsequence $\{\chi_{n_k}\}_{k\in\w}$ which weak$^\ast$ converges to some functional $\chi\in V^\circ$. By Theorem 8.10.5  of \cite{NaB}, the adjoint operator $S^\ast$ is weak$^\ast$ continuous and hence $S^\ast(\chi_{n_k}) \to S^\ast(\chi)$ in the weak$^\ast$ topology.  Since $T$ is weak-weak sequentially continuous, we have $ST(x_{n_k})\to 0$ in the weak topology. Taking into account that $T$ is weak$^\ast$ $p$-convergent, we obtain
\[
|\langle\chi_{n_k}, ST(x_{n_k})\rangle|\leq |\langle S^\ast(\chi_{n_k}-\chi),T(x_{n_k})\rangle| + |\langle \chi, ST(x_{n_k})\rangle|\to 0 \; \mbox{ as } k\to\infty.
\]
Since, by the choice of $\chi_{n_k}$, $|\langle\chi_{n_k}, ST(x_{n_k})\rangle|>1$ we obtain a desired contradiction.
\smallskip

(v)$\Ra$(vi) follows from the well known fact that $B_{c_0}^\circ$ is even a weak$^\ast$ metrizable compact space.
\smallskip

(vi)$\Ra$(i) Assume that $L$ is $c_0$-barrelled. To show that $T$ is weak$^\ast$ $p$-convergent, fix a weakly $p$-summable sequence $\{x_n\}_{n\in\w}$ in $E$ and a weak$^\ast$ null sequence $\{\chi_n\}_{n\in\w}$ in $L'$. Since $L$ is $c_0$-barrelled,  (ii) of Proposition 4.17 of \cite{Gab-Pel} 
implies that there is $S\in\LL(L,c_0)$ such that $S(y)=\big(\langle\chi_n,y\rangle\big)_{n\in\w}$ for every $y\in L$. By assumption $ST$ is $p$-convergent. Therefore $ST(x_n)\to 0$ in $c_0$. Taking into account that $\{e^\ast_n\}_{n\in\w}$ is bounded in the Banach space $(c_0)'=\ell_1$, we obtain
\[
\begin{aligned}
\big|\langle\chi_k, T(x_k)\rangle\big| & =\big|\langle e^\ast_k, (\langle\chi_n, T(x_k)\rangle)_{n\in\w}\rangle\big|
=\big|\langle e^\ast_k, ST(x_k)\rangle\big|\\
& \leq \|e^\ast_k\|_{\ell_1} \cdot \|ST(x_k)\|_{c_0} = \|ST(x_k)\|_{c_0}\to 0.
\end{aligned}
\]
This shows that $T$ is weak$^\ast$ $p$-convergent.
\smallskip

(ii)$\Ra$(vii) Assume that  $T$ transforms weakly sequentially $p$-precompact subsets of $E$ to limited subsets of $L$, and let  $R:X\to E$ be a weakly sequentially $p$-precompact operator. Then 
the set $R(B_X)$ is weakly sequentially $p$-precompact, and hence $TR(B_X)$ is a limited subset of $L$. Thus the operator $T\circ R$ is limited.
\smallskip

(vii)$\Leftrightarrow$(viii) follows from the equivalence (i)$\Leftrightarrow$(ii) of Theorem 5.5 
of \cite{Gab-limited} applied to $T\circ R$ and $p=\infty$.
\smallskip

(vii)$\Ra$(ix) is obvious.
\smallskip

(ix)$\Ra$(i) Assume additionally that $E$ is weakly sequentially locally $p$-complete. Let $S=\{x_n\}_{n\in\w}$ be a weakly $p$-summable sequence in $E$. Then, by Proposition 14.9 of \cite{Gab-Pel}, 
the linear map  $R:\ell_1^0 \to E$ defined by
\[
R(a_0 e_0+\cdots+a_ne_n):=a_0 x_0+\cdots+ a_n x_n \quad (n\in\w, \; a_0,\dots,a_n\in\IF)
\]
is continuous. It is clear that $R\big(B_{\ell_1^0}\big) \subseteq \cacx(S)$. Since $E$ is weakly sequentially locally $p$-complete it follows that $\cacx(S)$ is weakly sequentially $p$-precompact. Therefore $R$ is a weakly sequentially $p$-precompact operator and hence, by (ix), $TR$ is limited. Whence for every weak$^\ast$ null sequence $\{\eta_n\}_{n\in\w}$ in $L'$ we obtain
\[
|\langle\eta_n,T(x_n)\rangle|=|\langle\eta_n,TR(e_n)\rangle|\leq \sup_{x \in B_{\ell_1^0}} |\langle\eta_n,TR(x)\rangle|\to 0 \mbox{ as $n\to\infty$}.
\]
Thus $T$ is weak$^\ast$ $p$-convergent.
\smallskip


(iii)$\Ra$(x) Let $Z$ be a normed space, and let $S:Z\to E$ be a weakly sequentially $p$-compact operator. Then $S(B_Z)$ is a relatively weakly sequentially $p$-compact subset of $E$. By (iii), the set $TS(B_Z)$ is limited. Thus the linear map $T\circ S$ is limited.
\smallskip

(x)$\Ra$(xi) 
By Proposition 1.4 of \cite{CS} (or by (ii) of Corollary 13.11 of \cite{Gab-Pel}), 
the identity operator $\Id_{\ell_{p^\ast}}$ of $\ell_{p^\ast}$ is weakly sequentially $p$-compact. Hence 
each operator $S=S\circ \Id_{\ell_{p^\ast}}\in \LL(\ell_{p^\ast},E)$ is weakly sequentially $p$-compact. Thus, by (x), $T\circ S$ is limited for every $S\in \LL(\ell_{p^\ast},E)$.
\smallskip

(xi)$\Ra$(i) Assume that $E$ is sequentially complete. Let $\{\chi_n\}_{n\in\w}$ be a weak$^\ast$ null sequence in $L'$, and let $\{x_n\}_{n\in\w}$ be a weakly $p$-summable sequence in $E$. By Proposition 4.14 of \cite{Gab-Pel}, 
there is $S\in \LL(\ell_{p^\ast},E)$ such that $S(e_n^\ast)=x_n$ for every $n\in\w$ (where $\{e^\ast_n\}_{n\in\w}$ is the canonical unit basis of $\ell_{p^\ast}$). By (xi), $T\circ S$ is limited. Therefore
\[
\big|\langle\chi_{n}, T(x_n)\rangle\big|= \big|\langle\chi_{n}, TS(e_n^\ast)\rangle\big|\leq \sup_{x\in B_{\ell_{p^\ast}}} |\langle\chi_{n}, TS(x)\rangle|\to 0 \quad \mbox{ as }\; n\to\infty.
\]
Thus the linear map $T$ is weak$^\ast$ $p$-convergent.\qed
\end{proof}

\begin{remark} {\em
The condition on $L$ to be $c_0$-barrelled in the implication  (vi)$\Ra$(i) in (B) of Theorem \ref{t:weak*-p-convergent} is essential. Indeed, let $p\in[1,\infty]$, $E=L=C_p(\mathbf{s})$ and let $T=\Id:E\to L$ be the identity map. Then, by (ii) of Example \ref{exa:weak*-p-convergent-weak-con}, $L$ is a metrizable non-$c_0$-barrelled space and $T$ is not weak$^\ast$ $p$-convergent. On the other hand, it is easy to see that each $S\in \LL(L,c_0)$ is finite-dimensional (see Lemma 17.18 of \cite{Gab-Pel}), and therefore $S\circ T$ is also finite-dimensional and hence $p$-convergent (see (i) of Proposition \ref{p:weak*-p-convergent-weak-con}). Thus the implication  (vi)$\Ra$(i) in (B) of Theorem \ref{t:weak*-p-convergent} does not hold.\qed}
\end{remark}

Below we characterize weakly $p$-convergent operators  (for the definition of the map $S_\infty$ see Section \ref{sec:pre}).

\begin{theorem} \label{t:char-weakly-p-convergent}
Let $p\in[1,\infty]$, $E$ and $L$ be locally convex spaces,  and let  $T:E\to L$ be an operator. 
Consider the following assertions:
\begin{enumerate}
\item[{\rm(i)}] $T$ is weakly $p$-convergent;
\item[{\rm(ii)}] $T$ transforms weakly sequentially $p$-precompact subsets of $E$ to  $\infty$-$(V^\ast)$ subsets of $L$;
\item[{\rm(iii)}] $T$ transforms weakly sequentially $p$-compact subsets of $E$ to  $\infty$-$(V^\ast)$ subsets of $L$;
\item[{\rm(iv)}] $T$ transforms weakly $p$-summable sequences of $E$ to  $\infty$-$(V^\ast)$ subsets of $L$;
\item[{\rm(v)}] $R\circ T$ is $p$-convergent for  any Banach space $Z$ and each $R\in \LL(L,Z)$ with weakly sequentially precompact adjoint $R^\ast:Z'_\beta \to L'_\beta$; 
\item[{\rm(vi)}] $R\circ T$ is $p$-convergent for each $R\in \LL(L,c_0)$ with weakly sequentially precompact adjoint $R^\ast:\ell_1 \to L'_\beta$;
\item[{\rm(vii)}] $R\circ T$ is $p$-convergent for each operator $R\in \LL(L,c_0)$ such that $S_\infty(R)=(\chi_n)$ is weakly null in $L'_\beta$;
\item[{\rm(viii)}] for any normed space $X$ and each weakly sequentially $p$-precompact operator $R:X\to E$, the operator $T\circ R$ is $\infty$-$(V^\ast)$;
\item[{\rm(ix)}]  for every normed space $X$ and each weakly sequentially $p$-precompact operator $S$ from $X$ to $E$, the map $S^\ast\circ T^\ast$ is $\infty$-convergent;
\item[{\rm(x)}] if $R\in \LL(\ell_1^0,E)$ is weakly sequentially $p$-precompact, then $T\circ R$ is $\infty$-$(V^\ast)$;
\item[{\rm(xi)}]  for every normed space $Z$ and each weakly sequentially $p$-compact operator $S$ from $Z$ to $E$, the composition $T\circ S$ is a $\infty$-$(V^\ast)$ linear map;
\item[{\rm(xii)}]  for any operator $S\in \LL(\ell_{p^\ast}, E)$, the linear map $T\circ S$ is $\infty$-$(V^\ast)$.
\end{enumerate}
Then:
\begin{enumerate}
\item[{\rm(A)}] {\rm(i)$\Leftrightarrow$(ii)$\Leftrightarrow$(iii)$\Leftrightarrow$(iv)};
\item[{\rm(B)}] {\rm(i)$\Ra$(v)$\Ra$(vi)}, and if $L$ is $c_0$-barrelled and $L'_\beta$ is weakly sequentially  locally  $\infty$-complete, then {\rm(vi)$\Ra$(i)};
\item[{\rm(C)}] {\rm(i)$\Ra$(vii)}, and if $L$ is $c_0$-barrelled, then {\rm(vii)$\Ra$(i)};
\item[{\rm(D)}] {\rm(ii)$\Ra$(viii)$\Leftrightarrow$(ix)$\Ra$(x)}, and if $E$ is weakly sequentially locally $p$-complete, then {\rm(x)$\Ra$(i)};
\item[{\rm(E)}] if $1<p<\infty$, then {\rm(i)$\Ra$(viii)$\Ra$(xi)$\Ra$(xii)}, and if $E$ is sequentially complete, then {\rm(xii)$\Ra$(i)}.
\end{enumerate}
\end{theorem}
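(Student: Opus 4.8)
The plan is to run the entire argument in close parallel with the proof of Theorem \ref{t:weak*-p-convergent}, performing throughout the substitutions of ``weakly $p$-convergent'' for ``weak$^\ast$ $p$-convergent'', of ``$\infty$-$(V^\ast)$ set'' for ``limited set'', and of ``weakly null sequence in $L'_\beta$'' for ``weak$^\ast$ null sequence in $L'$''. The one observation legitimizing most of these transcriptions is that every weakly null sequence $\{\eta_n\}_{n\in\w}$ in $L'_\beta$ is automatically weak$^\ast$ null: each evaluation $\chi\mapsto\langle\chi,y\rangle$ ($y\in L$) is $\beta(L',L)$-continuous, so $\langle\eta_n,y\rangle\to 0$ for every $y\in L$. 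Granting this, part (A) is proved exactly as before: for (i)$\Ra$(ii) I would take a weakly sequentially $p$-precompact set $A$, assume $T(A)$ fails to be $\infty$-$(V^\ast)$, extract from the witnessing points a weakly $p$-Cauchy sequence $\{x_n\}$ and a weakly null $\{\eta_n\}\subseteq L'_\beta$ with $|\langle\eta_n,Tx_n\rangle|>\e$, and run the same gliding-hump selection, choosing $n_{k+1}$ with $|\langle\eta_{n_{k+1}},Tx_{n_k}\rangle|<\tfrac\e2$; weak $p$-convergence applied to the weakly $p$-summable differences $\{x_{n_{k+1}}-x_{n_k}\}$ against the weakly null $\{\eta_{n_{k+1}}\}$ yields the contradiction. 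The implications (ii)$\Ra$(iii)$\Ra$(iv) are trivial and (iv)$\Ra$(i) is immediate from the definition of an $\infty$-$(V^\ast)$ set.

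For part (B) the genuinely new device appears in (i)$\Ra$(v). Assuming $R\circ T$ is not $p$-convergent for some Banach $Z$ and $R\in\LL(L,Z)$ with weakly sequentially precompact adjoint, I would fix a weakly $p$-summable $\{x_n\}$ with $\|RTx_n\|>\e$, pick norming $\eta_n\in B_{Z'}$ with $|\langle\eta_n,RTx_n\rangle|>\e$, and set $\zeta_n:=R^\ast(\eta_n)$. Since $R^\ast$ is weakly sequentially precompact, after passing to a subsequence $\{\zeta_n\}$ is weakly Cauchy in $L'_\beta$, so its consecutive differences are weakly null. Choosing $n_{k+1}>n_k$ with $|\langle\zeta_{n_k},Tx_{n_{k+1}}\rangle|<\tfrac\e2$ (possible because $\langle\zeta_{n_k},Tx_m\rangle=\langle T^\ast\zeta_{n_k},x_m\rangle\to0$ as $m\to\infty$) and applying weak $p$-convergence to the weakly null $\{\zeta_{n_{k+1}}-\zeta_{n_k}\}$ against the weakly $p$-summable $\{x_{n_{k+1}}\}$ contradicts $|\langle\zeta_{n_{k+1}},Tx_{n_{k+1}}\rangle|>\e$. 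Then (v)$\Ra$(vi) is the specialization $Z=c_0$ (with $(c_0)'_\beta=\ell_1$), and for (vi)$\Ra$(i) I would, given a weakly null $\{\eta_n\}\subseteq L'_\beta$, use $c_0$-barrelledness and Proposition 4.17 of \cite{Gab-Pel} to build $R\in\LL(L,c_0)$, $R(y)=(\langle\eta_n,y\rangle)_n$; weak sequential local $\infty$-completeness of $L'_\beta$ makes $\cacx(\{\eta_n\})$ weakly sequentially precompact, whence $R^\ast(B_{\ell_1})\subseteq\cacx(\{\eta_n\})$ shows $R^\ast$ weakly sequentially precompact, and (vi) gives $RTx_n\to0$ in $c_0$, so $|\langle\eta_n,Tx_n\rangle|\le\|RTx_n\|_{c_0}\to0$.

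Part (C) runs similarly. For (i)$\Ra$(vii), writing $\chi_n:=R^\ast(e_n^\ast)$ for the $n$-th term of $S_\infty(R)$, weakly null in $L'_\beta$, I would suppose $RTx_n\not\to0$ for some weakly $p$-summable $\{x_n\}$, choose $k_n$ with $|\langle\chi_{k_n},Tx_n\rangle|>\e$, and split into two cases: if $(k_n)$ has a bounded subsequence some $\chi_k$ is tested infinitely often, contradicting $\langle T^\ast\chi_k,x_n\rangle\to0$; if $(k_n)\to\infty$ then $\{\chi_{k_n}\}$ is weakly null and weak $p$-convergence forces $\langle\chi_{k_n},Tx_n\rangle\to0$. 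The converse (vii)$\Ra$(i) again uses $c_0$-barrelledness and Proposition 4.17 to realize a weakly null $\{\eta_n\}$ as $S_\infty(R)$. For parts (D) and (E) I would transcribe parts (C) and (D) of Theorem \ref{t:weak*-p-convergent}: (ii)$\Ra$(viii) is the definition applied to $R(B_X)$; (viii)$\Leftrightarrow$(ix) is the adjoint duality that an operator into $L$ is $\infty$-$(V^\ast)$ iff its adjoint is $\infty$-convergent (the analogue of the limited/$\infty$-convergent duality used for (vii)$\Leftrightarrow$(viii) there); (viii)$\Ra$(x) and (viii)$\Ra$(xi) are trivial restrictions to smaller classes of factoring operators; (xi)$\Ra$(xii) uses that $\Id_{\ell_{p^\ast}}$ is weakly sequentially $p$-compact (Proposition 1.4 of \cite{CS}); and the closing implications (x)$\Ra$(i) and (xii)$\Ra$(i) use the factorizations $R:\ell_1^0\to E$ (Proposition 14.9 of \cite{Gab-Pel}) and $S:\ell_{p^\ast}\to E$ (Proposition 4.14 of \cite{Gab-Pel}) through the hypothesized $\{x_n\}$, together with weak sequential local $p$-completeness, resp.\ sequential completeness, of $E$.

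I expect the main obstacles to be two. First, the adjoint/weakly-Cauchy mechanism in (i)$\Ra$(v): unlike the weak$^\ast$ case the test functionals are only weakly Cauchy, not null, so the selection must be arranged so that it is the \emph{consecutive differences} that are weakly null and so that the diagonal term stays bounded below, which is precisely where the weakly-null-implies-weak$^\ast$-null remark and the continuity of $T^\ast$ enter. Second, verifying in (vi)$\Ra$(i) that the synthesized $R^\ast$ is genuinely weakly sequentially precompact — this is exactly what the hypothesis that $L'_\beta$ be weakly sequentially locally $\infty$-complete is designed to supply — and, relatedly, pinning down the duality (viii)$\Leftrightarrow$(ix) in the correct topologies ($L'_\beta$ rather than $L'_{w^\ast}$). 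Everything else should be a faithful, if lengthy, transcription of the proof of Theorem \ref{t:weak*-p-convergent}.
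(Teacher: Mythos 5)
Your proposal is correct and follows essentially the same route as the paper's own proof: the same substitution scheme applied to Theorem \ref{t:weak*-p-convergent}, the same gliding-hump argument for (A), the same weakly-Cauchy-adjoint-images device with consecutive differences for (i)$\Ra$(v), the same use of $c_0$-barrelledness, Proposition 4.17 of \cite{Gab-Pel}, and the local-completeness hypotheses for the converse implications, and the same external references (Theorem 14.1 and Propositions 14.9, 4.14 of \cite{Gab-Pel}, Proposition 1.4 of \cite{CS}) for parts (D) and (E). The only cosmetic deviation is your case dichotomy in (i)$\Ra$(vii) (bounded subsequence of indices versus indices tending to infinity), which replaces the paper's explicit construction of two strictly increasing index sequences but rests on the same facts and is equally valid.
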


\begin{proof}
(i)$\Ra$(ii) Let $A$ be a weakly sequentially $p$-precompact subset of $E$, and suppose for a contradiction that $T(A)$ is not an $\infty$-$(V^\ast)$ set. Therefore there is a weakly null sequence $\{\eta_n\}_{n\in\w}$ in $L'_\beta$, a sequence $\{x_n\}_{n\in\w}$ in  $A$ and $\e>0$ such that $|\langle\eta_n,T(x_n)\rangle |>\e$ for every $n\in\w$. Since $A$ is weakly sequentially $p$-precompact, without loss of generality we assume that $\{x_n\}_{n\in\w}$ is weakly $p$-Cauchy.

For $n_0=0$, since $\{\eta_n\}_{n\in\w}$ is also weak$^\ast$ null we can choose $n_1>n_0$ such that $|\langle\eta_{n_1},T(x_{n_0})\rangle |<\tfrac{\e}{2}$. Proceeding by induction on $k$, we can choose $n_{k+1}>n_k$ such that $|\langle\eta_{n_{k+1}},T(x_{n_k})\rangle |<\tfrac{\e}{2}$. Since the sequence $\{x_{n_{k+1}} -x_{n_k}\}_{k\in\w}$ is weakly $p$-summable and $T$ is weakly $p$-convergent, we obtain
\[
\big\langle \eta_{n_{k+1}}, T\big(x_{n_{k+1}} -x_{n_k}\big)\big\rangle \to 0.
\]
On the other hand, for every $k\in\w$,  we have
\[
\big|\big\langle \eta_{n_{k+1}}, T\big(x_{n_{k+1}} -x_{n_k}\big)\big\rangle\big| \geq \big|\big\langle \eta_{n_{k+1}}, T\big(x_{n_{k+1}}\big)\big\rangle\big| - \big|\big\langle \eta_{n_{k+1}}, T\big(x_{n_k}\big)\big\rangle\big|> \tfrac{\e}{2},
\]
a contradiction.

(ii)$\Ra$(iii) and (iii)$\Ra$(iv) are trivial.

(iv)$\Ra$(i) follows from the definition of $\infty$-$(V^\ast)$.

(i)$\Ra$(v) Assume that $T$ is weakly $p$-convergent, and suppose for a contradiction that $R\circ T$ is not $p$-convergent for some  Banach space $Z$ and $R\in \LL(L,Z)$ with weakly sequentially precompact adjoint $R^\ast:Z'_\beta \to L'_\beta$.  Then there are a weakly $p$-summable sequence $\{x_n\}_{n\in\w}$ in $E$ and $\delta>0$  such that $\|RT(x_n)\|>\delta$ for every $n\in\w$. By the Hahn--Banach separation theorem, for every $n\in\w$ there is $\eta_n\in (\delta B_Z)^\circ$ such that $|\langle R^\ast(\eta_n), T(x_n)\rangle|=|\langle\eta_n, RT(x_n)\rangle|>1$. Since $R^\ast$ is weakly sequentially precompact, passing to subsequences if needed we can assume that the sequence $\{R^\ast(\eta_n)\}_{n\in\w}\subseteq L'_\beta$ is weakly Cauchy.

For $n_0=0$, choose $n_1> n_0$ such that $|\langle R^\ast(\eta_{n_0}), T(x_{n_1})\rangle|<\tfrac{1}{2}$ (this is possible since $\{x_n\}_{n\in\w}$ and hence also $\{T(x_n)\}_{n\in\w}$ are weakly null). By induction on $k\in\w$, for every $k>0$ choose $n_{k+1}> n_k$ such that $|\langle R^\ast(\eta_{n_k}), T(x_{n_{k+1}})\rangle|<\tfrac{1}{2}$. As $\{R^\ast(\eta_n)\}_{n\in\w}\subseteq L'_\beta$ is weakly Cauchy, the sequence $\{R^\ast(\eta_{n_k})-R^\ast(\eta_{n_{k+1}})\}_{k\in\w}$ is weakly null in $L'_\beta$. Taking into account that $\{x_{n_k}\}_{n\in\w}$ is also weakly $p$-summable and $T$ is weakly $p$-convergent we obtain
\[
0\leftarrow \big| \big\langle R^\ast(\eta_{n_k}-\eta_{n_{k+1}}), T(x_{n_{k+1}})\big\rangle\big| \geq
\big| \big\langle R^\ast(\eta_{n_{k+1}}), T(x_{n_{k+1}})\big\rangle\big|-\big| \big\langle R^\ast(\eta_{n_{k}}), T(x_{n_{k+1}})\big\rangle\big|>\tfrac{1}{2},
\]
a contradiction.
\smallskip

(v)$\Ra$(vi) is trivial.
\smallskip

(vi)$\Ra$(i) Assume that $L$ is $c_0$-barrelled and $L'_\beta$ is weakly sequentially  locally  $\infty$-complete. To show that $T$ is weakly $p$-convergent, fix a weakly $p$-summable sequence $\{x_n\}_{n\in\w}$ in $E$ and a weakly null sequence $\{\chi_n\}_{n\in\w}$ in $L'_\beta$. Since $L$ is $c_0$-barrelled,  (ii) of Proposition 4.17 of \cite{Gab-Pel} 
implies that there is $R\in\LL(L,c_0)$ such that $R(y)=\big(\langle\chi_n,y\rangle\big)_{n\in\w}$ for every $y\in L$. Since $\{\chi_n\}_{n\in\w}$ is weakly null in $L'_\beta$ and $L'_\beta$ is weakly sequentially  locally  $\infty$-complete,  it follows that $\cacx\big(\{\chi_n\}_{n\in\w}\big)$ is weakly sequentially precompact. Taking into account that $R^\ast(e_n^\ast)=\chi_n$ for every $n\in\w$ (where as usual $\{e_n^\ast\}_{n\in\w}$ is the canonical unit basis of $\ell_1$), we obtain $R^\ast(B_{\ell_1})\subseteq \cacx\big(\{\chi_n\}_{n\in\w}\big)$. Therefore $R^\ast$ is  weakly sequentially precompact, and hence, by (vi),  $RT$ is $p$-convergent. Hence $RT(x_n)\to 0$ in $c_0$. Therefore
\[
\big|\langle\chi_k, T(x_k)\rangle\big|=\big|\langle e^\ast_k, (\langle\chi_n, T(x_k)\rangle)_{n\in\w}\rangle\big|
=\big|\langle e^\ast_k, RT(x_k)\rangle\big|\leq \|e^\ast_k\|_{\ell_1} \cdot \|RT(x_k)\|_{c_0} = \|RT(x_k)\|_{c_0}\to 0.
\]
Thus $T$ is weakly $p$-convergent.
\smallskip

(i)$\Ra$(vii) Assume that $T$ is weakly $p$-convergent, and suppose for a contradiction that $R\circ T$ is not $p$-convergent for some operator $R\in \LL(L,c_0)$ such that the sequence $S_\infty(R)=(\chi_n)$ is weakly null in $L'_\beta$.  Then there are a weakly $p$-summable sequence $\{x_n\}_{n\in\w}$ in $E$ and $\e>0$  such that $\|RT(x_{n})\|_{c_0}\geq \e$ for every $n\in\w$. Recall that $R(y)=\big(\langle\chi_n,y\rangle\big)_n \in c_0$ for every $y\in L$. Then for every $n\in\w$, we have (where as usual $\{e^\ast_i\}_{i\in\w}$ is the canonical unit basis of $\ell_1=(c_0)'$)
\begin{equation} \label{equ:p-convergent-10}
\e \leq\|RT(x_{n})\|_{c_0} = \sup_{i\in\w} |\langle e^\ast_i, RT(x_{n})\rangle|=\sup_{i\in\w} |\langle R^\ast(e^\ast_i),T(x_n)\rangle|=\sup_{i\in\w} |\langle \chi_i, T(x_n)\rangle|.
\end{equation}
For $n_0=0$, choose $i_0\in\w$ such that $|\langle \chi_{i_0}, T(x_{n_0})\rangle|\geq \tfrac{\e}{2}$. Since $T$ is weak-weak sequentially continuous and because the sequence $\{x_n\}_{n\in\w}$ is also weakly null, it follows that $T(x_n)\to 0$ in the weak topology of $L$. Therefore we can choose $n_1>n_0$ such that
\begin{equation} \label{equ:p-convergent-11}
|\langle \chi_{i}, T(x_{n_1})\rangle|< \tfrac{\e}{2}\;\; \mbox{ for every $i\leq i_0$}.
\end{equation}
By (\ref{equ:p-convergent-10}), there is $i_1\in\w$ such that $|\langle \chi_{i_1}, T(x_{n_1})\rangle|\geq \tfrac{\e}{2}$. Taking into account (\ref{equ:p-convergent-11}) we obtain that $i_1>i_0$. Since  $T(x_n)\to 0$ in the weak topology of $L$, there exists $n_2>n_1$ such that
\begin{equation} \label{equ:p-convergent-12}
|\langle \chi_{i}, T(x_{n_2})\rangle|< \tfrac{\e}{2}\;\; \mbox{ for every $i\leq i_1$}.
\end{equation}
By (\ref{equ:p-convergent-10}), there is $i_2\in\w$ such that $|\langle \chi_{i_2}, T(x_{n_2})\rangle|\geq \tfrac{\e}{2}$. By (\ref{equ:p-convergent-12}), we obtain that $i_2>i_1$. Continuing this process we find two sequences $\{\chi_{i_k}\}_{k\in\w}$ and  $\{T(x_{n_k})\}_{n\in\w}$ such that $\{i_k\}_{k\in\w}$ and $\{n_k\}_{k\in\w}$ are strictly increasing and $|\langle \chi_{i_k}, T(x_{n_k})\rangle|\geq \tfrac{\e}{2}$ for every $k\in\w$. Clearly, the sequence $\{\chi_{i_k}\}_{k\in\w}$ is weakly null in $L'_\beta$ and the sequence $\{x_{n_k}\}_{n\in\w}$ is weakly $p$-summable in $E$.  Then the weak $p$-convergence of $T$ and the choice of these two sequences imply
\[
\tfrac{\e}{2}\leq \lim_{k\to\infty} |\langle \chi_{i_k}, T(x_{n_k})\rangle|= 0
\]
which is impossible.
\smallskip

(vii)$\Ra$(i) To show that $T$ is weakly $p$-convergent, fix a weakly $p$-summable sequence $\{x_n\}_{n\in\w}$ in $E$ and a weakly null sequence $\{\chi_n\}_{n\in\w}$ in $L'_\beta$. Since $L$ is $c_0$-barrelled,  (ii) of Proposition 4.17 of \cite{Gab-Pel} 
implies that there is $R\in\LL(L,c_0)$ such that $R(y)=\big(\langle\chi_n,y\rangle\big)_{n\in\w}$ for every $y\in L$. Since $\{\chi_n\}_{n\in\w}$ is weakly null in $L'_\beta$ and $S_\infty(R)=(\chi_n)$,  (vii) implies that  $RT$ is $p$-convergent. Hence $RT(x_n)\to 0$ in $c_0$. If $\{e^\ast_n\}_{n\in\w}$ is the canonical unit basis of $(c_0)'=\ell_1$, we obtain
\[
\big|\langle\chi_k, T(x_k)\rangle\big|=\big|\langle e^\ast_k, (\langle\chi_n, T(x_k)\rangle)_{n\in\w}\rangle\big|
=\big|\langle e^\ast_k, RT(x_k)\rangle\big|\leq \|e^\ast_k\|_{\ell_1} \cdot \|RT(x_k)\|_{c_0} = \|RT(x_k)\|_{c_0}\to 0.
\]
Thus $T$ is weakly $p$-convergent.
\smallskip

(ii)$\Ra$(viii) Assume that  $T$ transforms weakly sequentially $p$-precompact subsets of $E$ to $\infty$-$(V^\ast)$ subsets of $L$, and let  $R:X\to E$ be a weakly sequentially $p$-precompact operator. Then 
the set $R(B_X)$ is weakly sequentially $p$-precompact, and hence $TR(B_X)$ is an $\infty$-$(V^\ast)$ subset of $L$. Thus the operator $T\circ R$ is $\infty$-$(V^\ast)$.

(viii)$\Leftrightarrow$(ix) follows from the equivalence (i)$\Leftrightarrow$(ii) in Theorem 14.1 of \cite{Gab-Pel} 
applied to $T\circ R$ and $p=\infty$.

(viii)$\Ra$(x) is obvious.

(x)$\Ra$(ii) Assume that $E$ is weakly sequentially locally $p$-complete. Let $S=\{x_n\}_{n\in\w}$ be a weakly $p$-summable sequence in $E$. Then, by Proposition 14.9 of \cite{Gab-Pel}, 
the linear map  $R:\ell_1^0 \to E$ defined by
\[
R(a_0 e_0+\cdots+a_ne_n):=a_0 x_0+\cdots+ a_n x_n \quad (n\in\w, \; a_0,\dots,a_n\in\IF)
\]
is continuous. It is clear that $R\big(B_{\ell_1^0}\big) \subseteq \cacx(S)$. Since $E$ is weakly sequentially locally $p$-complete it follows that $\cacx(S)$ is weakly sequentially $p$-precompact. Therefore $R$ is a weakly sequentially $p$-precompact operator and hence, by (x), $TR$ is $\infty$-$(V^\ast)$. Whence for every weakly null sequence $\{\eta_n\}_{n\in\w}$ in $L'_\beta$ we obtain
\[
|\langle\eta_n,T(x_n)\rangle|=|\langle\eta_n,TR(e_n)\rangle|\leq \sup_{x\in B_{\ell_1^0}} |\langle\eta_n,TR(x)\rangle|\to 0 \mbox{ as $n\to\infty$}.
\]
Thus $T$ is weakly $p$-convergent.
\smallskip


(viii)$\Ra$(xi) is trivial.
\smallskip

(xi)$\Ra$(xii) Let $1<p<\infty$. 
By Proposition 1.4 of \cite{CS} (or by (ii) of Corollary 13.11 of \cite{Gab-Pel}), 
the identity operator $\Id_{\ell_{p^\ast}}$ of $\ell_{p^\ast}$ is weakly sequentially $p$-compact. Hence 
each operator $S=S\circ \Id_{\ell_{p^\ast}}\in \LL(\ell_{p^\ast},E)$ is weakly sequentially $p$-compact. Thus, by (xi), $T\circ S$ is an $\infty$-$(V^\ast)$ operator for every $S\in \LL(\ell_{p^\ast},E)$.
\smallskip

(xii)$\Ra$(i) Let $1<p<\infty$ and assume that $E$ is sequentially complete. Let $\{\chi_n\}_{n\in\w}$ be a weakly null sequence in $L'_\beta$, and let $\{x_n\}_{n\in\w}$ be a weakly $p$-summable sequence in $E$. By Proposition 4.14 of \cite{Gab-Pel}, 
there is $S\in \LL(\ell_{p^\ast},E)$ such that $S(e_n^\ast)=x_n$ for every $n\in\w$ (where $\{e^\ast_n\}_{n\in\w}$ is the canonical unit basis of $\ell_{p^\ast}$). By (xii), $T\circ S$ is a $\infty$-$(V^\ast)$ map. Therefore
\[
\big|\langle\chi_{n}, T(x_n)\rangle\big|= \big|\langle\chi_{n}, TS(e_n^\ast)\rangle\big|\leq \sup_{x\in B_{\ell_{p^\ast}}} |\langle\chi_{n}, TS(x)\rangle|\to 0 \quad \mbox{ as }\; n\to\infty.
\]
Thus the linear map $T$ is weakly $p$-convergent.\qed
\end{proof}

\begin{remark} {\em
The condition on $E$ to be sequentially complete in (D) of Theorem \ref{t:weak*-p-convergent} and in (E) of Theorem \ref{t:char-weakly-p-convergent} is essential. Indeed, let $1<p<\infty$, $E=\ell_{p^\ast}^0$, $L=\ell_q$ with $p^\ast\leq q<\infty$,  and let $T=\Id:E\to L$ be the identity inclusion.
Then every $S\in \LL(\ell_{p^\ast},E)$ is finite-dimensional (indeed, since $E=\bigcup_{n\in\w} \IF^n$ it follows that $\ell_{p^\ast}=\bigcup_{n\in\w} S^{-1}(\IF^n)$ and hence, by the Baire property of $\ell_{p^\ast}$, $S^{-1}(\IF^m)$ is an open linear subspace of $\ell_{p^\ast}$ for some $m\in\w$ that is possible only if $S^{-1}(\IF^m)=\ell_{p^\ast}$; so $S$ is finite-dimensional). Therefore, by Lemma \ref{l:fd-pq-limited}, $T\circ S$ is limited for each $S\in \LL(\ell_{p^\ast},E)$. However, $T$ is not weakly $p$-convergent and hence, by (ii) of Proposition \ref{p:weak*-p-convergent-weak-con}, $T$ is not weak$^\ast$ $p$-convergent. Indeed, for every $n\in\w$, let $x_n=e^\ast_n\in E$ and $\eta_n=e^\ast_n\in L'$. Then the sequence $\{x_n\}_{n\in\w}$ is weakly $p$-summable in $E$ (for every $\chi=(a_n)\in \ell_p=E'$, we have $\sum_{n\in\w} |\langle \chi,x_n\rangle|^p=\sum_{n\in\w} |a_n|^p<\infty$) and the sequence  $\{\eta_n\}_{n\in\w}$ is even weakly $q$-summable in $\ell_{q^\ast} =L'_\beta$ (for every $x=(b_n)\in \ell_q=(L'_\beta)'$, we have $\sum_{n\in\w} |\langle x,\eta_n\rangle|^q=\sum_{n\in\w} |b_n|^q<\infty$). Since for every $n\in\w$, $\langle\eta_n,T(x_n)\rangle=1$ it follows that $T$ is not weakly $p$-convergent.
\qed}
\end{remark}

\bibliographystyle{amsplain}

\begin{thebibliography}{10}


\bibitem{AB}
C.D. Aliprantis, O. Burkinshaw, {\em Positive Operators}, Springer, Dordrecht, 2006.


\bibitem{BourDies}
J. Bourgain, J. Diestel, {\em Limited operators and strict cosingularity}, Math. Nachr. \textbf{119} (1984), 55--58.



\bibitem{CS}
J.M.F. Castillo, F. Sanchez, Dunford--Pettis-like properties of continuous vector function spaces, Revista Mat. Complut. \textbf{6}:1 (1993), 43--59.

\bibitem{CCDL}
D. Chen, J.A. Ch\'{a}vez-Dom\'{\i}nguez, L. Li, $p$-Converging operators and Dunford--Pettis property of order $p$, J. Math. Anal. Appl. \textbf{461} (2018), 1053--1066.


\bibitem{EKHBM}
A. El Kaddouri, J. H'michane, K. Bouras, M. Moussa, On the class of weak$^\ast$ Dunford--Pettis operators, Rend. Circ. Mat. Palermo \textbf{62}:2 (2013), 261--265.


\bibitem{FZ-con}
J.H. Fourie, E.D. Zeekoei, On $p$-convergent operators on Banach spaces and Banach lattices, FABWI-N-WSK-2015-461, North-West University, Potchefstroom, 2015, 27 pp.




\bibitem{FZ-p}
J.H. Fourie, E.D. Zeekoei, On weak-star $p$-convergent operators, Quest. Math. \textbf{40} (2017), 563--579.

\bibitem{FZ-pL}
J.H. Fourie, E.D. Zeekoei, On $p$-convergent operators on Banach lattices, Acta Math. Sinica  \textbf{34}:5 (2018), 873--890.



\bibitem{Gabr-free-lcs}
S. Gabriyelyan, Locally convex properties of free locally convex spaces,  J. Math. Anal. Appl. 480 (2019) 123453.


\bibitem{Gab-Respected}
S. Gabriyelyan, {\em Maximally almost periodic groups and respecting properties}, Descriptive Topology and Functional Analysis II, J.C. Ferrando (ed.), Springer Proceedings in Mathematics \& Statistics,  \textbf{286} (2019), 103--136.


\bibitem{Gab-Pel}
S. Gabriyelyan, Pe{\l}czy\'{n}ski's type sets and Pe{\l}czy\'{n}ski's geometrical properties of locally convex spaces, submitted  ({\tt arxiv.org/abs/2402.08860}).


\bibitem{Gab-limited}
S. Gabriyelyan, Limited type subsets of locally convex spaces, submitted  ({\tt arxiv.org/abs/2403.02016}).



\bibitem{GalMir}
P. Galindo, V.C.C. Miranda, A class of sets in a Banach space coarser than limited sets, Bull. Braz. Math. Soc., New series \textbf{53} (2022), 941--955.

\bibitem{Ghenciu-pGP}
I. Ghenciu, The $p$-Gelfand--Phillips property in spaces of operators and Dunford--Pettis like sets, Acta Math. Hungar. \textbf{155}:2 (2018), 439--457.

\bibitem{Jar}
H.~Jarchow, \emph{Locally Convex Spaces}, B.G. Teubner, Stuttgart, 1981.

\bibitem{KarnSinha}
A.K. Karn, D.P. Sinha, An operator summability in Banach spaces, Glasgow Math. J. \textbf{56} (2014), 427--437.

\bibitem{LCCD}
L. Li, D. Chen, J.A. Ch\'{a}vez-Dom\'{\i}nguez, Pe{\l}czy\'{n}ski's property $(V^\ast)$ of order $p$ and its quantification, Math. Nachrichten \textbf{291} (2018), 420--442.

\bibitem{NaB}
L. Narici, E. Beckenstein,  \emph{Topological vector spaces}, Second Edition, CRC Press, New York, 2011.

\bibitem{SaMo}
M. Salimi, S.M. Moshtaghioun, The Gelfand--Phillips property in closed subspaces of some operator spaces. Banach J. Math. Anal. \textbf{5}  (2011), 84--92.

\end{thebibliography}

\end{document}